
\documentclass[11pt]{article}

\usepackage{latexsym}
\usepackage{mathrsfs}
\usepackage{amsmath,amsthm}
\usepackage{graphicx,epstopdf,epsfig,multirow,epic,bm}
\usepackage{amssymb}
\usepackage{color}
\usepackage{colortbl}

\theoremstyle{plain}
\newtheorem{thm}{Theorem}

\newtheorem{lem}[thm]{Lemma}

\theoremstyle{definition}
\newtheorem{defn}{Definition}
\theoremstyle{definition}
\newtheorem{prob}{Problem}
\theoremstyle{plain}
\newtheorem{conj}{Conjecture}
\theoremstyle{plain}

\oddsidemargin=0 cm
\topmargin=0 cm
\textheight=21 true cm \textwidth=16 true cm
\normalsize \rm
\parindent=16pt
\DeclareGraphicsExtensions{.eps,.eps.gz}
\DeclareGraphicsRule{*}{eps}{*}{}

\linespread{1.0}
\theoremstyle{plain}

\newcommand{\js}{\hfill $\square$}
\newcommand{\ud}{\mathrm{deg}}

\usepackage[displaymath,mathlines]{lineno}

\begin{document}

\thispagestyle{empty}

\begin{center}
{\huge\bf Probing Graph Proper Total Colorings With Additional
Constrained Conditions}\\[12pt]
{\large Bing Yao$^{a,}$\footnote{Corresponding author, Email: yybb918@163.com}\quad Ming Yao$^{b}$\quad Xiang-en Chen$^{a}$}
\end{center}
\begin{flushleft}
{\footnotesize a. College of Mathematics and Statistics, Northwest
Normal University, Lanzhou, 730070, CHINA\\
b. Department of Information Process and Control Engineering
Lanzhou Petrochemical College of Vocational Technology
Lanzhou, 730060, CHINA}
\end{flushleft}

\begin{abstract}
Graph colorings are becoming an
increasingly useful family of mathematical models for a broad range
of applications, such as \emph{time tabling and scheduling},
\emph{frequency assignment}, \emph{register allocation},
\emph{computer security} and so on. Graph proper total colorings with additional constrained conditions
have been investigated intensively in the last decade year.  In this article some new graph proper total
colorings with additional constrained conditions are defined, and
approximations to the chromatic numbers of these colorings are
researched, as well as some graphs having these colorings have been
verified.\\[6pt]
\textbf{AMS Subject Classification (2000):} 05C15\\[6pt] \textbf{Keywords:}
vertex distinguishing coloring; edge-coloring; total coloring
\end{abstract}

\section{Introduction and concepts}

A graph coloring/labelling is an assignments to vertices, edges or both by some certain requirements. The main reason is that graph colorings can divide a complex network into some smaller subnetworks such that each subnetwork has itself character differing from that of the rest subnetworks. Graph colorings/labellings have been applied in many areas of science and mathematics, such as in  X-ray crystallographic analysis, to design communication network, in determining optimal circuit layouts and radio astronomy. In \cite{Jaina-Krishna2002} the authors pointed out that graph theory provides important tools to capture various aspects of the network structure, and the analysis of such dynamical systems is facilitated by the development of some new tools in graph theory. It is interesting that the frequency assignment problem of  communication networks  is very similar with graph distinguishing colorings below. In the article \cite{Burris-Schelp}, Burris and Schelp introduce that a proper edge-coloring of  a simple graph $G$ is called a \emph{vertex distinguishing edge-coloring} (vdec) if for any two distinct vertices $u$ and $v$ of $G$, the set of the colors assigned to the edges incident to $u$ differs from the set of the colors assigned to the edges incident to $v$. The minimum number of colors required for all vertex distinguishing colorings of $G$ is denoted by $\chi \,'_s(G)$. Let $n_d=n_d(G)$ denote the number of all vertices of degree $d$ in $G$. It is clear that ${{\chi \,'_s(G)}\choose d}\geq n_d$ for all $d$ with respect to $\delta(G)\leq d\leq \Delta(G)$. Burris and Schelp \cite{Burris-Schelp} presented the following conjecture:

\begin{conj} \label{conj:c4-Burris-Schelp-conjecture}
Let $G$ be a simple graph with no isolated edges and at most one isolated vertex, and let $k$ be the smallest integer such that $(^k_d)\geq n_d$ for all $d$ with respect to $\delta(G)\leq d\leq
\Delta(G)$. Then $k\leq \chi \,'_s(G)\leq k+1$.
\end{conj}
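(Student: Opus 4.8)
The plan is to treat the two inequalities separately, since they are of entirely different character. The lower bound $k\le\chi\,'_s(G)$ is immediate from the counting fact recorded just before the statement: in any vertex-distinguishing edge-coloring of $G$ with $\chi\,'_s(G)$ colors, every vertex $v$ of degree $d$ carries the $d$-set $S(v)$ of colors of its incident edges, and $S(u)\ne S(w)$ whenever $\deg(u)=\deg(w)$, so $\binom{\chi\,'_s(G)}{d}\ge n_d$ for each $d$ with $\delta(G)\le d\le\Delta(G)$. By the minimality built into the definition of $k$ this forces $\chi\,'_s(G)\ge k$. Taking $d=\Delta(G)$ in the same inequality gives $\binom{k}{\Delta(G)}\ge n_{\Delta(G)}\ge 1$, hence $k\ge\Delta(G)$ --- a fact I would use throughout the second half. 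Note also that since $d\le\Delta(G)\le k$, one always has $\binom{k+1}{d}\ge\binom{k}{d}\ge n_d$, so a valid assignment of distinct color-sets to same-degree vertices is at least not ruled out by counting once the palette is $[k+1]$.

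All the real work is in the upper bound $\chi\,'_s(G)\le k+1$, which I expect to be the entire obstacle; indeed this is precisely the open part of the conjecture. My approach would be the probabilistic method on the fixed palette $[k+1]$. Since $k+1\ge\Delta(G)+1$, proper edge-colorings with this palette are plentiful, so one starts from a proper edge-coloring $\varphi_0$ and perturbs it at random --- recoloring along random alternating (Kempe) chains, or assigning nearly-uniform independent colors and then repairing conflicts locally, in either case keeping properness --- so that, writing $B_{u,w}$ for the event $\{S(u)=S(w)\}$ over pairs with $\deg(u)=\deg(w)$, each probability $\Pr[B_{u,w}]$ is small. One would then hope to finish by the Lov\'asz Local Lemma, or by a union bound, to conclude that with positive probability no $B_{u,w}$ occurs, which together with properness yields the desired coloring.

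The difficulty --- the reason a one-line probabilistic argument does not settle the conjecture --- is twofold. First, the events $B_{u,w}$ do not have bounded dependency: $B_{u,w}$ shares a random edge with $B_{x,y}$ whenever $\{x,y\}$ meets $N[u]\cup N[w]$, but the partner vertex is otherwise free, so the dependency degree scales with $|V(G)|$ rather than with $\Delta(G)$, and one is forced to reorganize the argument degree-class by degree-class and use concentration (Chernoff, Azuma) inside each class $V_d$. Second, and more seriously, whenever $d$ is small and $n_d$ is close to the maximum $\binom{k+1}{d}$, the sets $S(v)$ over the $n_d$ vertices of degree $d$ must occupy almost all the available $d$-subsets of $[k+1]$: there is essentially no slack, random choices are far too lossy, and the assignment of color-sets to these vertices is nearly rigid. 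To cope with this I would peel off a bounded set of such tight, low-degree vertices, color the rest of $G$ by the probabilistic argument above, and only then color the edges meeting the peeled-off vertices, deterministically and last, realizing a prescribed system of distinct $d$-sets on them via a Hall-type bipartite-matching argument on the colors still available; the hypotheses that $G$ has no isolated edge and at most one isolated vertex are exactly what exclude the tiny obstructions ($K_2$, $2K_1$) to such a matching step. I should say plainly that pushing this through uniformly is just what remains open: the conjecture is currently known only in partial form --- for example when $\delta(G)$ or $\Delta(G)$ is large, and for trees, complete graphs, and complete bipartite graphs --- so a complete proof along these lines would need a genuinely new idea to handle the low-degree, near-extremal regime, and what I am really describing is the standard line of attack with an honest flag on the one step that has so far resisted it.
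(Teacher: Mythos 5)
The statement you were asked about is not proved anywhere in the paper: it is Conjecture~1, the Burris--Schelp conjecture, which the authors merely quote from the literature as motivation, so there is no paper proof to compare your attempt against. Your treatment of the lower bound is correct and is exactly the counting observation the paper records just before the statement: in any vertex-distinguishing proper edge-coloring the $n_d$ vertices of degree $d$ receive pairwise distinct $d$-subsets of the palette, hence $\binom{\chi\,'_s(G)}{d}\geq n_d$ for all $d$ with $\delta(G)\leq d\leq\Delta(G)$, and the minimality in the definition of $k$ forces $\chi\,'_s(G)\geq k$; your side remark that $k\geq\Delta(G)$ is also right.

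For the upper bound $\chi\,'_s(G)\leq k+1$ you offer only a strategy sketch (random perturbation of a proper edge-coloring, a Local Lemma or concentration argument within each degree class, and a Hall-type repair for tight low-degree classes), and you correctly flag that this does not close: the dependency structure of the collision events is unbounded, and the near-extremal regime where $n_d$ is close to $\binom{k+1}{d}$ leaves no slack for any lossy random assignment, which is precisely where all known attacks on this conjecture stall. So, judged as a proof of the stated claim, the proposal has a genuine and unavoidable gap --- the second inequality is simply not established --- but this is consistent with the paper itself, which presents the statement as an open conjecture and only verifies instances of related quantities. In short, your lower-bound argument is complete and is the intended one; the upper bound remains conjectural, and your honest acknowledgment of that is the correct assessment rather than a defect you could have repaired within the scope of this paper.
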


A weak version of the vdec was introduced in \cite{Zhang-Liu-Wang-2001}, called the \emph{adjacent vertex distinguishing
edge coloring} (avdec). Zhang et al. \cite{Zhang-Liu-Wang-2001} asked for every edge $xy$ of $G$, the set of the colors assigned to the edges incident to $x$ differs from the set of the colors assigned to the edges incident to $y$ in an avdec, and use the notation $\chi \,'_{as}(G)$ to denote the least number of $k$ colors required fir which $G$ admits a $k$-avdec. They proposed: Every simple graph $G$ having no isolated edges  and at most one isolated vertex holds $\chi \,'_{as}(G)\leq \Delta(G)+2$. Surprisingly, it is very difficult to
settle down this conjecture, even for simple graphs (cf. \cite{Balister-Bollobas-Schelp}). In 2005, Zhang et al.\cite{Zhang-Chen-Li-Yao-Lu-Wang2005} investigated the adjacent vertex distinguishing total coloring (avdtc) of graphs, and proposed a conjecture: $\chi \,''_{as}(G)\leq \Delta(G)+2$, where $\chi \,''_{as}(G)$ is the smallest number of $k$ colors for which $G$ admits a $k$-avdtc. But, settling down these two conjectures is not a light work. Graph  distinguishing colorings are investigated intensively within two decades years (cf.  \cite{Dong-Wang2012}, \cite{Dong-Wang2014}, \cite{Hatami-H2010}, \cite{Yu-Qu-Wang-Wan2016}).

We use standard notation and terminology of graph theory. The shorthand symbol $[a,b]$ denotes an integer set
$\{a,a+1,a+2,\dots, b\}$ with integers $b>a\geq 1$. The set of vertices adjacent to a vertex $u$ is denoted by $N(u)$, and the set of edges incident to the vertex $u$ is denoted by $N_e(u)$. We call a graph $G$ to be simple if the degree $\ud_G(u)=|N(u)|$ for every vertex $u\in V(G)$. Graphs mentioned here are simple, undirected and finite.  Let $f$ be a proper total $k$-coloring of a simple graph $G$. The colors of neighbors of the vertex $u$ form the following color sets $C(f,u)=\{f(e): e\in N_e(u)\}=\{f(ux):x\in N(u)\}$, $C\langle f,u\rangle=\{f(x): x\in N(u)\}\cup \{f(u)\}$, $C[f,u]=C(f,u)\cup
\{f(u)\}$, and $N_2[f,u]=C(f,u)\cup C\langle f,u\rangle$. Notice that $\ud_G(u)+1\leq |N_2[f,u]|$, where $\ud_G(u)=|N(u)|$ is the degree of the vertex $u$. These color sets gives rise to distinguishing total colorings of various types. So we have a set $A_{cc}(G)$ containing the following \emph{additional constrained conditions}:

(C\textbf{1}) $C(f,u)\neq C(f,v)$ for distinct $u,v\in V(G)$;

(C\textbf{2}) $C(f,x)\neq C(f,y)$ for every edge $xy\in E(G)$;

(C\textbf{3}) $C\langle f,u\rangle\neq C\langle f,v\rangle$ for
distinct $u,v\in V(G)$;

(C\textbf{4}) $C\langle f,x\rangle\neq C\langle f,y\rangle$ for
every edge $xy\in E(G)$;

(C\textbf{5}) $C[f,u]\neq C[f,v]$ for distinct $u,v\in V(G)$;

(C\textbf{6}) $C[f,x]\neq C[f,y]$ for every edge $xy\in E(G)$;

(C\textbf{7}) $N_2[f,u]\neq N_2[f,v]$ for distinct $u,v\in V(G)$;
and

(C\textbf{8}) $N_2[f,x]\neq N_2[f,y]$ for every edge $xy\in E(G)$.

\vskip 0.2cm

We restate some known colorings again and define new colorings in
Definition \ref{def:N2-mu-e-mixed-distinguishing-coloring}.

\begin{defn} \label{def:22}
Let $f$ be a proper total $k$-coloring of a simple graph $G$ having
$n\geq 3$ vertices and no isolated edges as well as at most one
isolated vertex. We have eight distinguishing total colorings with
additional constrained conditions as follows:

\emph{Type 1}: This total $k$-coloring $f$ is called an
\emph{e-partially vertex distinguishing proper total $k$-coloring}
(e-partially $k$-vdtc) if it holds (C\textbf{1}), and the smallest
number of $k$ colors required for which $G$ admits an e-partially $k$-vdtc is
denoted as $\chi\,''_{(s)}(G)$; this total coloring $f$ is called an
\emph{e-partially adjacent vertex distinguishing proper total
$k$-coloring} (e-partially $k$-avdtc) if it holds (C\textbf{2}), and
the smallest number of $k$ colors required for which $G$ admits an
e-partially $k$-avdtc is denoted as $\chi\,''_{(as)}(G)$.

\emph{Type 2}: This total $k$-coloring $f$ is called a \emph{v-partially
vertex distinguishing proper total $k$-coloring} (v-partially
$k$-vdtc) if it holds (C\textbf{3}), and the smallest number of $k$
colors required for which $G$ admits a v-partially $k$-vdtc is denoted as
$\chi\,''_{\langle s\rangle}(G)$; this total coloring $f$ is called
a \emph{v-partially adjacent vertex distinguishing proper total
$k$-coloring} (v-partially $k$-avdtc) if it holds (C\textbf{4}), and
the smallest number of $k$ colors required for which $G$ admits a v-partially
$k$-avdtc is denoted as $\chi\,''_{\langle as\rangle}(G)$.

\emph{Type 3}: This total $k$-coloring $f$ is called a \emph{vertex
distinguishing proper total $k$-coloring} ($k$-vdtc) if it holds
(C\textbf{5}), and the smallest number of $k$ colors required for which $G$
admits a $k$-vdtc is denoted as $\chi\,''_{s}(G)$; this total
coloring $f$ is called an \emph{adjacent vertex distinguishing
proper total $k$-coloring} ($k$-avdtc) if it holds (C\textbf{6}),
and the smallest number of $k$ colors required for which $G$ admits a
$k$-avdtc is denoted as $\chi\,''_{as}(G)$.

\emph{Type 4}: This total $k$-coloring $f$ is called a $\mu
(k)$-\emph{coloring} if it holds (C\textbf{7}), and the notation
$\chi\,''_{2s}(G)$ stands for the least number of $k$ colors
required for which $G$ admits a $\mu (k)$-coloring; this total
coloring $f$ is called a $\mu _e(k)$-\emph{coloring} if it holds
(C\textbf{8}), and the symbol $\chi\,''_{2as}(G)$ denotes the least
number of $k$ colors required for which $G$ admits a
$\mu_e(k)$-coloring.\js
\end{defn}

Clearly, the degree $\ud_G(u)$ of a vertex $u$ of a simple graph $G$ holds: $\big |C(f,u)\big |=\ud_G(u)$, $2\leq \big |C\langle f,u\rangle \big |\leq \ud_G(u)+1$, $\big |C[f,u]\big |=\ud_G(u)+1$ and $\ud_G(u)+1\leq \big |N_2[f,u]\big |\leq 2\ud_G(u)+1$. Therefore, v-partially $k$-vdtcs, v-partially $k$-avdtcs, $\mu (k)$-colorings and $\mu _e(k)$-colorings may be complicated than e-partially $k$-vdtcs, e-partially $k$-avdtcs, $k$-vdtcs and $k$-avdtcs. In Figure \ref{fig:N2-total-01}(a) and (b), we can see $\chi\,''_{(s)}(G)<\chi\,''_{(s)}(H)$ although $H$ is a proper
subgraph of $G$.

\begin{defn} \label{def:33}
Let $f$ be a proper total coloring of a simple graph $G$ having $n\geq 3$ vertices and no isolated edges as well as at most one isolated vertex. We call $f$ an \emph{$(8)$-distinguishing total coloring} if it holds each one of $A_{cc}(G)$. The minimum number of $k$ colors required for which $G$ admits an $(8)$-distinguishing total
$k$-coloring is denoted as $\chi\,''_{(8)}(G)$. We call $f$ a \emph{$(6)$-distinguishing total coloring} if it holds each one of $A_{cc}(G)\setminus \{$ (C\textbf{3}), (C\textbf{4})$\}$. The minimum number of $k$ colors required for which $G$ admits a $(6)$-distinguishing total $k$-coloring is denoted as $\chi\,''_{(6)}(G)$.\js
\end{defn}

The $(8)$-distinguishing total coloring has been discussed in \cite{Yang-Yao-Ren2015} and \cite{Yang-Ren-Yao2016}. Furthermore, we can define: A \emph{$(4)$-avdtc} (resp. a $(3)$-avdtc) is a proper total
coloring holding each one of $A_{cc}(G)\setminus \{ (C\textbf{1}), (C\textbf{3}), (C\textbf{5})  (C\textbf{7})\}$  (resp. $A_{cc}(G)\setminus \{$ (C\textbf{1}), (C\textbf{3}), (C\textbf{4}), (C\textbf{5}) (C\textbf{7})$\}$). And $\chi\,''_{(4)as}(G)$ (resp. $\chi\,''_{(3)as}(G)$) is the minimum number of $k$ colors required
for which $G$ admits a $k$-$(4)$-avdtc (resp. $k$-$(3)$-avdtc). A \emph{mixed $(m)$-total coloring} with $m\in [2, 7]$ is a proper total coloring holding $m$ additional constrained conditions of $A_{cc}(G)$, herein.

\begin{figure}[h]
\centering
\includegraphics[height=3.7cm]{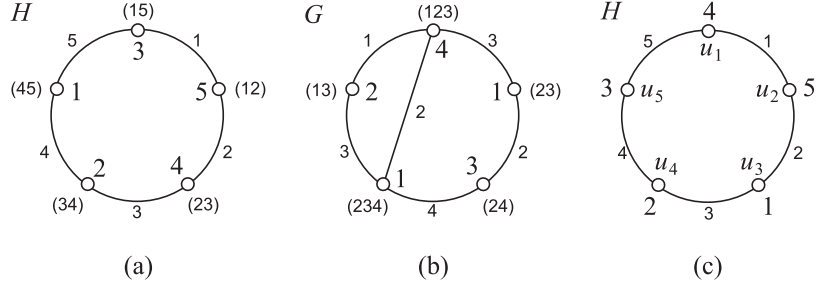}
\caption{\label{fig:N2-total-01}  {\small (a) $H$ admits an
e-partially $5$-vdtc (also, an e-partially $5$-avdtc), and
$\chi\,''_{(s)}(H)=5=\chi\,''_{(as)}(H)$; (b) $G$ admits an
e-partially $4$-vdtc (also, an e-partially $4$-avdtc), and
$\chi\,''_{(s)}(G)=4=\chi\,''_{(as)}(G)$; (c) $H$ admits an
$(8)$-distinguishing total $5$-coloring $f$, and
$\chi\,''_{(8)}(H)=5$.}}
\end{figure}

Clearly, every graph admits $(6)$-distinguishing total $k$-colorings
and $(3)$-avdtcs.  In Figure \ref{fig:N2-total-01}(c), all color
sets of the graph $H$ are listed in the following Table-1.
$$
\textbf{Table-1:\quad }
\begin{array}{c|cccc}
m&C(f,u_i)&C\langle f,u_i\rangle &C[f,u_i]&N_2[f,u_i]\\
\hline
u_1&\{1,5\}&\{3,4,5\}&\{1,4,5\}&\{1,3,4,5\}\\
u_2&\{1,2\}&\{1,4,5\}&\{1,2,5\}&\{1,2,4,5\}\\
u_3&\{2,3\}&\{1,2,5\}&\{1,2,3\}&\{1,2,3,5\}\\
u_4&\{3,4\}&\{1,2,3\}&\{2,3,4\}&\{1,2,3,4\}\\
u_5&\{4,5\}&\{2,3,4\}&\{3,4,5\}&\{2,3,4,5\}
\end{array}
$$

\begin{figure}[h]
\centering
\includegraphics[height=4.2cm]{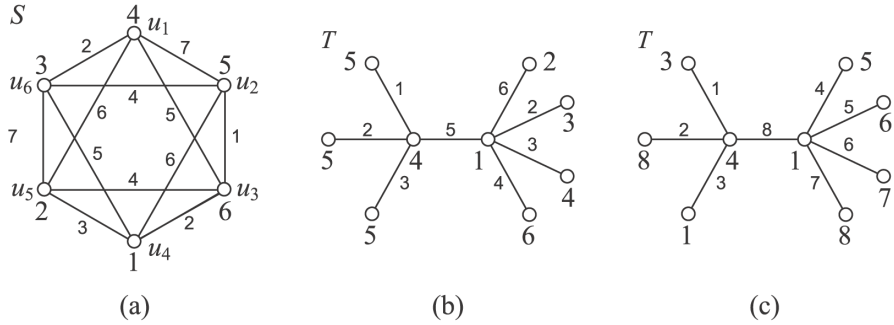}
\caption{\label{fig:N2-total-02}  {\small  (a) $S$ admits an
$(8)$-distinguishing total coloring, and $\chi\,''_{(8)}(S)=7$; (b)
$T$ admits a $6$-$(4)$-avdtc, and $\chi\,''_{(4)as}(T)=6$; (c) $T$
admits an $(8)$-distinguishing total coloring, and
$\chi\,''_{(8)}(T)=n_1(T)+1=8.$}}
\end{figure}

\begin{figure}[h]
\centering
\includegraphics[height=4.5cm]{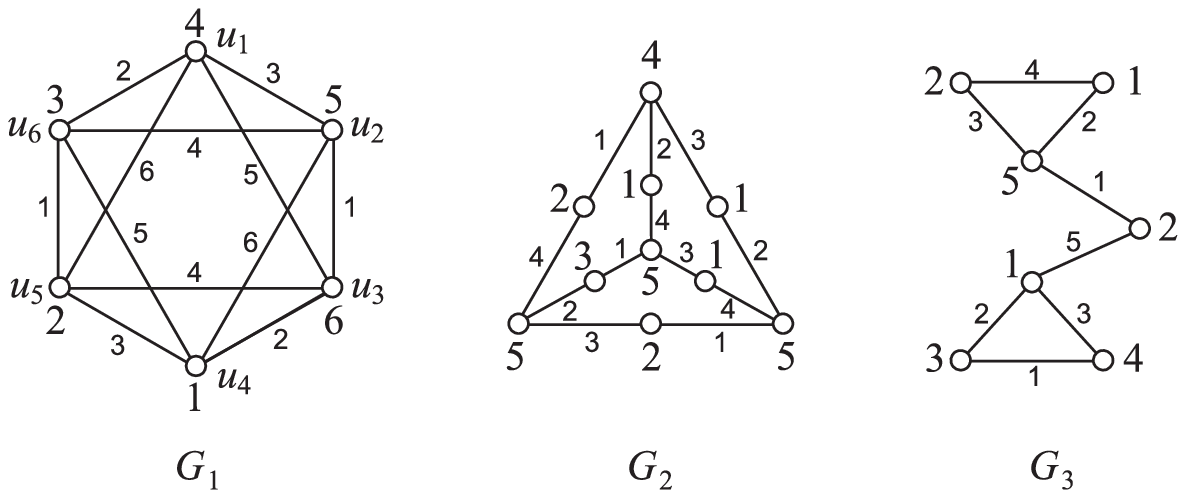}
\caption{\label{fig:n-suns}  {\small $G_1$ admits a mixed
$(7)$-total coloring; $G_2$ admits an $(8)$-distinguishing total
coloring, and $\chi\,''_{(8)}(G_2)=5$; $G_3$ admits a $(3)$-avdtc.}}
\end{figure}

Notice that $\chi\,''_{2s}(K_n)=\chi\,''_{2as}(K_n)$. In our memory,
the exact value $\chi\,''_{2as}(K_n)$ is not determined for all
integers $n\geq 3$ up to now. In the article
\cite{Zhang-Cheng-Yao-Li-Chen-Xu2008}, the authors introduce an
adjacent vertex strong-distinguishing proper total colorings that
is a $\mu (k)$-coloring defined in Definition
\ref{def:N2-mu-e-mixed-distinguishing-coloring}. They obtain the
exact values of $\chi\,''_{2as}$ for cycles $C_n$, paths $P_n$ and trees, and
present

\begin{conj} \label{conj:c4-Burris-Schelp-conjecture}
Let $G$ be a simple
graph with $n\geq 3$ vertices and no isolated edges. Then
$\chi\,''_{2as}(G)\leq n+\lceil \log_2n\rceil +1$.
\end{conj}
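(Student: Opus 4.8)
The plan is to reduce the conjectured bound for a general graph $G$ to the already-understood case of trees, using a spanning subtree together with a small reservoir of extra colors to repair the defects introduced by the non-tree edges. Concretely, I would first fix a spanning tree $T$ of $G$ (take one spanning tree per component, padding with the at most one allowed isolated vertex), and recall from \cite{Zhang-Cheng-Yao-Li-Chen-Xu2008} that every tree on $n$ vertices satisfies $\chi\,''_{2as}(T)\le n+\lceil\log_2 n\rceil+1$; in fact the essential structural fact I want is that one can properly totally color $T$ with that many colors so that the sets $N_2[f,u]$ are pairwise distinct \emph{and} one has freedom to permute colors on a small set. I would then color the remaining edges $E(G)\setminus E(T)$ greedily: since a proper total coloring only forbids, for each edge $xy$, the colors already on $x$, $y$, and the edges at $x,y$, a palette of size roughly $2\Delta(G)+1$ suffices locally, but we must reconcile this with the near-linear bound, so the real work is to show $\Delta(G)$ is either small or that high-degree vertices are few.

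The key steps, in order, would be: (1) handle the low-maximum-degree regime — if $\Delta(G)$ is bounded by, say, $\tfrac{n}{2}$, then $2\Delta(G)+1\le n+1\le n+\lceil\log_2 n\rceil+1$ and a straightforward greedy/Vizing-type argument on the total graph gives a proper total coloring with that many colors, after which one perturbs colors on pendant-like or low-degree vertices to separate any colliding $N_2[\cdot]$ sets (there are at most $\binom{n}{2}$ pairs to separate, and each separation costs one swap among the $\lceil\log_2 n\rceil$ "spare" colors, mimicking the tree argument); (2) handle the high-degree regime — if some vertices have degree close to $n$, use the fact that $\sum_d n_d = n$ forces the number of such vertices to be small, color the dense part essentially by a proper total coloring of a clique-like subgraph (where $\chi\,''_{2as}(K_m)$ is $\le m+\lceil\log_2 m\rceil+O(1)$ by the binomial counting bound $\binom{k}{\lfloor k/2\rfloor}\ge m$) and extend outward; (3) in both regimes verify condition (C\textbf{7}), i.e. $N_2[f,u]\ne N_2[f,v]$ for every pair, by a counting argument: distinct vertices sharing the same $N_2$ set must have closely related neighborhoods, and the $\lceil\log_2 n\rceil$ reserve colors give enough independent binary "coordinates" to fingerprint all $n$ vertices, exactly as in the Burris–Schelp-style proof of Conjecture~\ref{conj:c4-Burris-Schelp-conjecture}.

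The hard part will be step (3) in the presence of the properness constraint: unlike a pure set system, we cannot freely assign the $N_2[f,u]$ sets — they are constrained to be unions of $C(f,u)$ and $C\langle f,u\rangle$ arising from an \emph{actual} proper total coloring, and changing one color to fix a collision at $\{u,v\}$ may create a new collision elsewhere or violate properness at a third vertex. Managing this interference is where I expect the argument to be delicate; the natural tool is to process collisions in a careful order (say, by increasing degree, or along a BFS layering of $T$) and to argue that each repair only affects a bounded neighborhood, so that with $n+\lceil\log_2 n\rceil+1$ colors there is always an unused color available to complete the swap — an entropy-compression or potential-function bookkeeping would make this precise. A secondary obstacle is the boundary between the two degree regimes: stitching the dense-part coloring to the sparse-part coloring without the palettes conflicting, which I would handle by reserving disjoint blocks of the $n+\lceil\log_2 n\rceil+1$ colors for the two parts and checking the block sizes still add up, using $\log_2 a+\log_2 b\le \log_2(a+b)+1$ type inequalities.
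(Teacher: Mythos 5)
This statement is a \emph{conjecture} in the paper (due to Zhang et al.\ \cite{Zhang-Cheng-Yao-Li-Chen-Xu2008}, who establish it only for paths, cycles and trees); the paper gives no proof, so your proposal is attempting an open problem, and as written it does not close it. The most concrete failure is step (2): you want to color the dense part using ``$\chi\,''_{2as}(K_m)\le m+\lceil\log_2 m\rceil+O(1)$ by the binomial counting bound.'' The counting inequality $\binom{k}{\lfloor k/2\rfloor}\ge m$ is only a necessary condition (a lower-bound-type constraint on how many distinct sets are available); it does not produce a proper total coloring realizing those sets, and the paper explicitly remarks that even the exact value of $\chi\,''_{2as}(K_n)$ is not determined for any general $n\ge 3$. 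So your high-degree regime rests on a claim that is itself open. A second, smaller but telling slip: $\chi\,''_{2as}$ is defined by condition (C\textbf{8}) (distinguishing $N_2[f,x]\ne N_2[f,y]$ only across edges $xy$), not (C\textbf{7}) for all pairs as you state in step (3); you are therefore aiming at a stronger target than the conjecture, which makes the plan even less likely to succeed but also shows it was not calibrated to the actual statement.

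The other load-bearing step, the ``repair'' argument in (1)/(3), is not an argument yet. You assert that each collision can be fixed by ``one swap among the $\lceil\log_2 n\rceil$ spare colors,'' but you give no mechanism that (a) preserves properness of the total coloring, (b) does not create new collisions at third vertices (each recoloring of an element incident to $u$ changes $N_2[f,w]$ for every $w$ within distance two of $u$), and (c) terminates --- invoking ``entropy compression or potential-function bookkeeping'' names a technique without supplying the local structure (a bounded dependency radius and a strictly decreasing potential) that such techniques require. You yourself flag this as the delicate point, which is exactly right: it is the entire difficulty of the conjecture, and nothing in the proposal reduces it. Likewise, the ``structural fact'' you want from the tree case (distinct $N_2$ sets \emph{plus} freedom to permute colors on a small set) is not what \cite{Zhang-Cheng-Yao-Li-Chen-Xu2008} provides; they compute exact values for trees, and no permutation-freedom lemma is available to import. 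In short, the proposal is a reasonable research outline (spanning tree plus reserved palette, split by degree regime), but each of its three steps currently depends on an unproved or unformulated lemma, so there is no proof here to compare with the paper --- the paper has none either.
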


The procedure of ``joining a vertex of a graph $G$ and a vertex $v$
out of $G$ by an edge'' is abbreviated as ``adding a leaf $v$ to
$G$'' here.  The notations $\chi(G)$, $\chi \,'(G)$, $\chi \,''(G)$,  denote the \emph{proper
chromatic number}, \emph{proper chromatic index}  and \emph{proper total chromatic number} of $G$
(cf. \cite{Bondy-Murty-new}), respectively.

\section{Graphs having total colorings with additional constrained conditions}

For the purpose of convenience, let $\mathcal {F}_{3s}(n)$ be the set of simple graphs with $n\geq 3$ vertices and no isolated edges as well as at most one isolated vertex. The following Lemma \ref{thm:observation000}
follows from Definition \ref{def:N2-mu-e-mixed-distinguishing-coloring}.

\begin{lem} \label{thm:observation000}
Let $G\in \mathcal {F}_{3s}(n)$. Then

$(i)$ $\chi\,''_{(s)}(G)\geq \chi\,'_{s}(G)$, $\chi\,''_{(s)}(G)\geq
\chi\,''_{(as)}(G)$ and $\chi\,''_{(as)}(G)\geq \chi\,'_{as}(G)$.

$(ii)$ $\chi\,''_{2s}(G)\geq \chi\,''_{2as}(G)\geq \chi\,''(G)$.

$(iii)$ If $n_d\geq 2$ with $\delta(G)\leq d\leq \Delta(G)$, then $
\chi\,''_{2s}(G)\geq d+2$.

$(iv)$ Let $\overline{G}$ be the complement of a graph $G\in
\mathcal {F}_{3s}(n)$, and let $\overline{G}\in \mathcal
{F}_{3s}(n)$. Then
$\chi\,''_{\varepsilon}(G)+\chi\,''_{\lambda}(\overline{G})\geq
\chi\,''_{\varepsilon}(K_n)$ for $\varepsilon=(s),(as),2s,2as$.
\end{lem}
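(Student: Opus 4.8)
The plan is to verify the four items essentially by unwinding definitions and by a routine ``restriction'' argument for the inequalities between the eight chromatic parameters. For item~$(i)$: every e-partially $k$-vdtc satisfies (C\textbf{1}), i.e.\ $C(f,u)\neq C(f,v)$ for all distinct $u,v$; forgetting the colors on the vertices leaves a proper edge-coloring of $G$ that vertex-distinguishes in the edge-sense, so it is a vdec. Hence any feasible number of colors for $\chi\,''_{(s)}$ is feasible for $\chi\,'_{s}$, giving $\chi\,''_{(s)}(G)\geq \chi\,'_{s}(G)$. The inequality $\chi\,''_{(s)}(G)\geq \chi\,''_{(as)}(G)$ is immediate since (C\textbf{1}) implies (C\textbf{2}) (distinguishing all pairs implies distinguishing adjacent pairs), so every e-partially $k$-vdtc is an e-partially $k$-avdtc. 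Similarly (C\textbf{2}) on the total coloring restricts to an avdec on the edge-coloring, giving $\chi\,''_{(as)}(G)\geq \chi\,'_{as}(G)$.

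For item~$(ii)$: (C\textbf{7}) implies (C\textbf{8}) by the same ``all pairs $\Rightarrow$ adjacent pairs'' observation, so every $\mu(k)$-coloring is a $\mu_e(k)$-coloring, giving $\chi\,''_{2s}(G)\geq \chi\,''_{2as}(G)$; and a $\mu_e(k)$-coloring is in particular a proper total $k$-coloring, whence $\chi\,''_{2as}(G)\geq \chi\,''(G)$. For item~$(iii)$: suppose $n_d\geq 2$, say $u,v$ are distinct vertices of degree $d$, and let $f$ be a $\mu(k)$-coloring. Since $|N_2[f,u]|\geq d+1$ and likewise for $v$, if only $d+1$ colors were available we would be forced to have $N_2[f,u]=N_2[f,v]=[1,d+1]$, contradicting (C\textbf{7}); hence $k\geq d+2$. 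Here one should be slightly careful to note $|N_2[f,w]|\geq \ud_G(w)+1$ holds for every vertex $w$ (this is recorded in the excerpt), so the only $(d+1)$-element set $N_2[f,\cdot]$ could equal is the full color set, which forces the collision. This is the step that needs the most attention, though it is still short.

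For item~$(iv)$: the idea is to superpose an optimal $\varepsilon$-coloring of $G$ and an optimal $\lambda$-coloring of $\overline{G}$ to build an $\varepsilon$-coloring of $K_n$. Concretely, take a $\chi\,''_{\varepsilon}(G)$-coloring $g$ of $G$ and a $\chi\,''_{\lambda}(\overline{G})$-coloring $h$ of $\overline{G}$ on disjoint color palettes, and define a total coloring $\phi$ of $K_n=G\cup\overline{G}$ (sharing the vertex set) by coloring the edges of $G$ via $g$, the edges of $\overline{G}$ via $h$, and each vertex via (say) $g$. One checks $\phi$ is a proper total coloring of $K_n$ since $g$ and $h$ are proper and use disjoint edge-palettes; and the relevant color set of a vertex $w$ in $K_n$ under $\phi$ is the disjoint union of its color set under $g$ and the edge-part of its color set under $h$, so distinctness (for all pairs when $\varepsilon=(s),2s$, or adjacent pairs — but in $K_n$ all pairs are adjacent — when $\varepsilon=(as),2as$) is inherited from $g$. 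Hence $\chi\,''_{\varepsilon}(G)+\chi\,''_{\lambda}(\overline{G})\geq \chi\,''_{\varepsilon}(K_n)$. The main obstacle is bookkeeping: making sure in each of the four cases $\varepsilon\in\{(s),(as),2s,2as\}$ that the particular color set in question ($C(\cdot)$, $C[\cdot]$, or $N_2[\cdot,\cdot]$) splits cleanly along the $G$/$\overline{G}$ edge partition and that no accidental coincidence is introduced by the shared vertex colors — choosing vertex colors from $g$'s palette and keeping the palettes disjoint handles this uniformly.
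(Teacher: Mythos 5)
Your arguments for items $(i)$–$(iii)$ are correct, and they coincide with what the paper tacitly intends: the paper offers no written proof of this lemma at all (it merely says the lemma ``follows from the definition''), and your restriction/implication observations for $(i)$–$(ii)$ together with the counting argument for $(iii)$ (if only $k\leq d+1$ colors were available, then two degree-$d$ vertices $u,v$ would force $N_2[f,u]=N_2[f,v]=[1,k]$, violating (C\textbf{7})) are exactly the routine verifications being invoked.

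Item $(iv)$, however, contains a genuine gap. First, the coloring $\phi$ you build need not be a proper total coloring of $K_n$: assigning each vertex its $g$-color only guarantees that vertices adjacent \emph{in $G$} get distinct colors, while two vertices non-adjacent in $G$ may share a $g$-color and are adjacent in $K_n$; in fact any proper total coloring of $K_n$ must use $n$ pairwise distinct vertex colors, which $g$ does not supply, and repairing this requires recoloring all vertices with distinct colors each avoiding its $n-1$ incident edge colors --- a nontrivial step you have not addressed. Second, the claim that distinctness of the relevant color sets ``is inherited from $g$'' fails for $\varepsilon=2s,2as$: in $K_n$ every vertex $w$ has $\{\phi(x):x\in N(w)\}\cup\{\phi(w)\}$ equal to the set $S$ of \emph{all} vertex colors, so $N_2[\phi,w]=C(\phi,w)\cup S$ with the same $S$ for every $w$; the condition $N_2[g,u]\neq N_2[g,v]$ may hold solely because of vertex colors of $G$-neighbors, and this information is washed out, i.e.\ $N_2[g,u]\neq N_2[g,v]$ does not imply $C(g,u)\cup S\neq C(g,v)\cup S$, while the vertex colors of $h$ are discarded so a $2s$- or $2as$-coloring of $\overline{G}$ yields nothing about $C(h,u)$ versus $C(h,v)$ either. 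Even in the case $\varepsilon=(as)$ the distinctness is not inherited from $g$ alone: pairs non-adjacent in $G$ are adjacent in $K_n$ and must be separated by $h$, which forces you to specify which condition $\lambda$ satisfies (the paper leaves $\lambda$ unquantified); your parenthetical remark notices this but your argument still attributes all distinctness to $g$. As written, $(iv)$ is established at best for $\varepsilon=(s)$ modulo the properness repair, and the $N_2$-based cases need a genuinely different idea.
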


It is noticeable, a complete graph $K_n$ admits no v-partially
$k$-vdtcs (also, v-partially $k$-avdtcs) at all.

\begin{lem} \label{thm:xxxxxxxx}
A graph $G\in \mathcal {F}_{3s}(n)$ admits a v-partially $k$-vdtc
(resp. a v-partially $k$-avdtc) if and only if $N(u)\cup \{u\}\neq
N(v)\cup \{v\}$ for distinct $u,v\in V(G)$ (resp. every edge $uv\in
E(G)$).
\end{lem}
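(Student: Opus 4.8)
The statement is a clean iff, so I would prove the two directions separately, and in fact observe that it suffices to prove the vertex‑distinguishing version: the adjacent version follows by restricting the argument from ``all pairs $u,v$'' to ``pairs $u,v$ joined by an edge'' everywhere, since nothing in the proof uses more than the relevant local structure. So in what follows I treat a v‑partially $k$‑vdtc (condition (C\textbf{3}): $C\langle f,u\rangle\neq C\langle f,v\rangle$ for distinct $u,v$), and I keep in mind that each sentence has an edge‑localized twin.

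\textbf{Necessity.} Suppose, for contradiction, that $N(u)\cup\{u\}=N(v)\cup\{v\}$ for some distinct $u,v\in V(G)$; call this common set $S$. I claim that \emph{every} proper total coloring $f$ of $G$ has $C\langle f,u\rangle=C\langle f,v\rangle$, so no v‑partially $k$‑vdtc can exist. Indeed, first note $u\in S$ forces $u\in N(v)\cup\{v\}$, hence $u\in N(v)$ (as $u\neq v$), so $uv\in E(G)$; symmetrically this is consistent. Now $C\langle f,u\rangle=\{f(x):x\in N(u)\}\cup\{f(u)\}=\{f(x):x\in (N(u)\cup\{u\})\}=\{f(x):x\in S\}$, because adjoining $f(u)$ to $\{f(x):x\in N(u)\}$ is exactly taking the image of $f$ over $N(u)\cup\{u\}=S$. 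The same computation gives $C\langle f,v\rangle=\{f(x):x\in S\}$. Hence $C\langle f,u\rangle=C\langle f,v\rangle$ regardless of $f$, which contradicts (C\textbf{3}). This handles the ``only if'' direction, and the edge‑version is identical since here $uv$ is automatically an edge.

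\textbf{Sufficiency.} Suppose $N(u)\cup\{u\}\neq N(v)\cup\{v\}$ for all distinct $u,v$. I want to produce \emph{some} $k$ and a proper total $k$‑coloring $f$ satisfying (C\textbf{3}); since $k$ is not constrained, I have a lot of room. The natural approach is to show that a \emph{generic} proper total coloring with sufficiently many colors works, by assigning the vertices injective colors. Concretely, take $k$ large (say $k\ge |V(G)|+\Delta(G)+1$ suffices for existence of a proper total coloring by a greedy argument), colour the $n$ vertices with $n$ \emph{distinct} colours $f(x)$, one per vertex, and then extend to a proper total colouring of the edges using fresh colours so that properness holds — this is always possible when $k$ is large. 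With vertices rainbow‑coloured, the map $x\mapsto f(x)$ is a bijection on $V(G)$, so for each vertex $w$ the set $\{f(x):x\in N(w)\cup\{w\}\}$ determines $N(w)\cup\{w\}$ uniquely. Therefore $C\langle f,u\rangle=\{f(x):x\in N(u)\cup\{u\}\}$ and $C\langle f,v\rangle=\{f(x):x\in N(v)\cup\{v\}\}$ are equal only if $N(u)\cup\{u\}=N(v)\cup\{v\}$, which we have excluded. Hence $f$ is a v‑partially $k$‑vdtc; restricting the distinctness requirement to adjacent pairs yields the v‑partially $k$‑avdtc statement with the same $f$.

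\textbf{Main obstacle.} The only delicate point is to make sure the rainbow‑vertex colouring can always be extended to a \emph{proper total} colouring — i.e.\ that the edges can be coloured avoiding, for each edge, the two colours of its endpoints, the colours already on edges sharing a vertex with it, and (for adjacent edges) each other. This is where I would be careful: the cleanest fix is to give the edges a block of brand‑new colours disjoint from the $n$ vertex colours and apply the classical bound $\chi''(G)\le 2\Delta(G)+1$ (or just a crude greedy bound) on that block; then total‑properness reduces to two independent easy facts. One should also note the hypothesis ``$G\in\mathcal F_{3s}(n)$'' (so $n\ge 3$, no isolated edges, at most one isolated vertex) is exactly what is needed elsewhere for these colourings to be well defined, and it is harmless here. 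I would present the write‑up in the order: reduce avdtc to vdtc; prove necessity by the $S$‑image computation; prove sufficiency by the rainbow‑vertex construction; finish with the one‑line remark that the edge‑localized versions are obtained verbatim.
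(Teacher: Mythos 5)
Your proposal is correct and follows essentially the same route as the paper: the necessity direction is the paper's argument stated in contrapositive form (equal closed neighborhoods force $C\langle f,u\rangle=C\langle f,v\rangle$ under any proper total coloring), and the sufficiency construction — rainbow vertex colors plus fresh edge colors — is exactly the paper's choice of a bijection from $V(G)\cup E(G)$ onto $[1,n+q]$. No gaps; your extension worry is harmless since with all-distinct colors properness is automatic and $C\langle f,u\rangle$ involves only vertex colors.
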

\begin{proof} To show the proof of `if', we take a v-partially
$k$-vdtc $f$ of $G$. Then, $C\langle f,u\rangle\neq C\langle
f,v\rangle$ for distinct $u,v\in V(G)$. If $uv\in E(G)$, $C\langle
f,u\rangle\neq C\langle f,v\rangle$ means $\{f(x): x\in
N(u)\}\setminus\{f(u),f(v)\}\neq \{f(x): x\in
N(v)\}\setminus\{f(u),f(v)\}$, and furthermore $N(u)\cup \{u\}\neq
N(v)\cup \{v\}$. If $uv\not\in E(G)$, $C\langle f,u\rangle\neq
C\langle f,v\rangle$ means $N(u)=N(v)$, or $N(u)\neq N(v)$. No mater
one of two cases occurs, we have $N(u)\cup \{u\}\neq N(v)\cup
\{v\}$.

Conversely, it is straightforward to provide a total coloring $h$ of
$G$ for the proof of `only if'. In fact, we can set $h$ as a
bijection from $V(G)\cup E(G)$ to $[1,n+q]$, where $q=|E(G)|$.
Clearly, $C\langle h,u \rangle \neq C\langle h,v \rangle$ for
distinct $u,v\in V(G)$ (including every edge $uv\in E(G)$), since
$N(u)\cup \{u\}\neq N(v)\cup \{v\}$.
\end{proof}

\begin{lem} \label{thm:observation111}
$(i)$ Every graph $G\in \mathcal {F}_{3s}(n)$ admits $(6)$-total
colorings and holds  $\chi\,''_{(6)}(G)\geq \chi\,''_{\lambda}(G)$
for $\lambda=(s),(as)$, $s$, $as$, $2s$, $2as$.

$(ii)$ If a graph $G\in \mathcal {F}_{3s}(n)$ admits
$(8)$-distinguishing total colorings, then $\chi\,''_{(8)}(G)\geq
\chi\,''_{\varepsilon}(G)$ for $\varepsilon=(s),(as)$, $\langle s\rangle$, $
\langle as\rangle$, $s$, $as$, $2s$, $2as$. Furthermore,
$$\chi\,''_{(8)}(G)\leq \min\{\chi\,'_s(G)+\chi\,''_{\langle s\rangle}(G),~  \chi\,''_{(s)}(G)+\chi\,''_{\langle
s\rangle}(G)\}.$$
\end{lem}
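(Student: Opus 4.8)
The plan is to settle the three assertions of Lemma~\ref{thm:observation111} in turn: the first two are bookkeeping over the eight conditions in $A_{cc}(G)$, and the third is a small colour-combining construction.

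For part $(i)$ I would first exhibit a $(6)$-distinguishing total coloring of an arbitrary $G\in\mathcal{F}_{3s}(n)$. Put $q=|E(G)|$ and let $h$ be any bijection from $V(G)\cup E(G)$ onto $[1,n+q]$, as in the proof of Lemma~\ref{thm:xxxxxxxx}; since all colours are distinct, $h$ is a proper total coloring, so only (C\textbf{1}),(C\textbf{2}),(C\textbf{5}),(C\textbf{6}),(C\textbf{7}),(C\textbf{8}) must be checked. The key remark is that, because $G$ has no isolated edge and at most one isolated vertex, distinct vertices $u,v$ satisfy $N_e(u)\neq N_e(v)$ --- if $N_e(u)=N_e(v)$ then every edge at $u$ is also at $v$, forcing $uv$ to be an isolated edge --- and hence also $N_e(u)\cup\{u\}\neq N_e(v)\cup\{v\}$ and $N_e(u)\cup N(u)\cup\{u\}\neq N_e(v)\cup N(v)\cup\{v\}$ as subsets of $V(G)\cup E(G)$. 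Applying the injective map $h$ to these three sets yields $C(h,u)\neq C(h,v)$, $C[h,u]\neq C[h,v]$ and $N_2[h,u]\neq N_2[h,v]$ for distinct $u,v$, that is (C\textbf{1}),(C\textbf{5}),(C\textbf{7}); and (C\textbf{2}),(C\textbf{6}),(C\textbf{8}) are weakenings of these. So $\chi\,''_{(6)}(G)$ is well defined, and a $(6)$-distinguishing total $k$-coloring, obeying each of (C\textbf{1}),(C\textbf{2}),(C\textbf{5}),(C\textbf{6}),(C\textbf{7}),(C\textbf{8}) separately, is simultaneously an e-partially $k$-vdtc, an e-partially $k$-avdtc, a $k$-vdtc, a $k$-avdtc, a $\mu(k)$-coloring and a $\mu_e(k)$-coloring; taking $k=\chi\,''_{(6)}(G)$ gives $\chi\,''_{(6)}(G)\ge\chi\,''_{\lambda}(G)$ for $\lambda=(s),(as),s,as,2s,2as$. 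The inequalities $\chi\,''_{(8)}(G)\ge\chi\,''_{\varepsilon}(G)$ in part $(ii)$ are the same observation: an $(8)$-distinguishing total $k$-coloring obeys each of (C\textbf{1})--(C\textbf{8}), hence is at once a coloring of every one of the eight types, and we substitute $k=\chi\,''_{(8)}(G)$.

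For the upper bound in part $(ii)$ I would argue by a palette-separating construction. Set $\alpha=\chi\,'_s(G)$ and $\beta=\chi\,''_{\langle s\rangle}(G)$; both are finite, $\alpha$ because $G\in\mathcal{F}_{3s}(n)$, and $\beta$ because the hypothesis that $G$ admits an $(8)$-distinguishing total coloring already forces (C\textbf{3}), so in particular $G$ admits v-partially $k$-vdtcs. Fix a $\chi\,'_s(G)$-vdec $g$ of $G$ with colour set $[1,\alpha]$ and a $\chi\,''_{\langle s\rangle}(G)$-v-partially vdtc of $G$, relabelled so that its vertex colours lie in $[\alpha+1,\alpha+\beta]$; call the latter $h'$. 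Define a total coloring $f$ of $G$ by $f(e)=g(e)$ for $e\in E(G)$ and $f(u)=h'(u)$ for $u\in V(G)$. Then $f$ is proper: $g$ colours the edges properly, the restriction of $h'$ colours the vertices properly, and a vertex and an incident edge receive colours from the disjoint sets $[\alpha+1,\alpha+\beta]$ and $[1,\alpha]$. Now $C(f,u)=C(g,u)$ and $C\langle f,u\rangle=C\langle h',u\rangle$, so (C\textbf{1}) holds because $g$ is a vdec and (C\textbf{3}) because $h'$ comes from a v-partially vdtc; moreover $C(g,u)=C[f,u]\cap[1,\alpha]=N_2[f,u]\cap[1,\alpha]$, so distinctness of the sets $C(g,u)$ immediately gives (C\textbf{5}) and (C\textbf{7}), and (C\textbf{2}),(C\textbf{4}),(C\textbf{6}),(C\textbf{8}) are the edge-versions of (C\textbf{1}),(C\textbf{3}),(C\textbf{5}),(C\textbf{7}). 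Hence $f$ is an $(8)$-distinguishing total $(\alpha+\beta)$-coloring, so $\chi\,''_{(8)}(G)\le\chi\,'_s(G)+\chi\,''_{\langle s\rangle}(G)$. Running the same argument with $g$ replaced by a $\chi\,''_{(s)}(G)$-e-partially vdtc on a colour set disjoint from that of $h'$ --- its defining property being precisely (C\textbf{1}), which is all that was used of $g$ --- gives $\chi\,''_{(8)}(G)\le\chi\,''_{(s)}(G)+\chi\,''_{\langle s\rangle}(G)$, and the minimum of the two bounds is the claim.

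None of this is deep; the proof is essentially a careful pass through the eight conditions, and the only genuinely useful idea is the palette separation: by forcing edge colours and vertex colours into disjoint integer intervals, the ``vertex'' conditions (C\textbf{3}),(C\textbf{4}) are made to depend only on the chosen proper vertex colouring and (C\textbf{1}),(C\textbf{2}) only on the chosen edge colouring, after which (C\textbf{5})--(C\textbf{8}) follow for free because intersecting $C[f,\cdot]$ or $N_2[f,\cdot]$ with the edge palette recovers $C(f,\cdot)$, which is already vertex-distinguishing. The two spots where one could slip are: remembering that $C\langle h',u\rangle$ depends only on the vertex colours of $h'$, so discarding the edge colouring of the v-partially vdtc costs nothing; and noticing that the defining hypotheses of $\mathcal{F}_{3s}(n)$ (no isolated edge, at most one isolated vertex) are exactly what makes the rainbow colouring in part $(i)$ satisfy (C\textbf{1}).
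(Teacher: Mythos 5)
Your proposal is correct, and it is worth separating how it relates to the paper's own treatment. For part $(i)$ and the lower bounds in part $(ii)$ you follow essentially the paper's route: the paper also takes the rainbow bijection $f(u_i)=i$, $f(e_j)=n+j$ and declares it ``clearly'' a $(6)$-total coloring, then gets the inequalities by definition-chasing; your added verification is exactly the missing detail, and your key observation --- that ``no isolated edges and at most one isolated vertex'' forces $N_e(u)\neq N_e(v)$ for distinct $u,v$, after which injectivity of the bijection gives (C\textbf{1}), (C\textbf{5}), (C\textbf{7}) and their edge-versions --- is the correct justification (note in particular your argument for (C\textbf{7}) rightly compares the \emph{edge} parts of the two sets, since the naive ``$u$ belongs to one set but not the other'' fails when $uv\in E(G)$). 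Where you genuinely diverge is the upper bound in part $(ii)$: the paper disposes of all of assertion $(ii)$ with the single sentence that it is ``an immediate consequence of Definition'', which does not actually establish $\chi\,''_{(8)}(G)\leq \min\{\chi\,'_s(G)+\chi\,''_{\langle s\rangle}(G),\ \chi\,''_{(s)}(G)+\chi\,''_{\langle s\rangle}(G)\}$; your palette-separating construction --- a vdec (or the edge part of an e-partially vdtc) on $[1,\alpha]$ combined with the vertex part of a v-partially vdtc shifted into $[\alpha+1,\alpha+\beta]$, so that intersecting $C[f,\cdot]$ and $N_2[f,\cdot]$ with $[1,\alpha]$ recovers the vertex-distinguishing edge palette while $C\langle f,\cdot\rangle$ depends only on the vertex colours --- is a genuine proof of that bound, in the spirit of the paper's Lemma \ref{thm:N2-vdtc-basic-lemma} but not spelled out anywhere in the paper. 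You also correctly note the two points of finiteness ($\chi\,'_s(G)$ from $G\in\mathcal{F}_{3s}(n)$, and $\chi\,''_{\langle s\rangle}(G)$ from the hypothesis that $G$ admits an $(8)$-distinguishing coloring, hence satisfies the condition of Lemma \ref{thm:xxxxxxxx}), which the paper never addresses. So your write-up both matches the paper where the paper has an argument and supplies a construction where the paper has only an assertion.
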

\begin{proof} Notice that the assertion $(ii)$ is an immediate consequence of Definition \ref{def:N2-mu-e-mixed-distinguishing-coloring}.
Thereby ,we show the assertion $(i)$. Let $V(G)=\{u_i:i\in [1,n]\}$,
$V(G)=\{e_j:j\in [1,q]\}$, where $q=|E(G)|$. It is sufficient to
define a total coloring $f$ of $G$ as: $f(u_i)=i$ for $i\in [1,n]$,
and $f(e_j)=n+j$ for $j\in [1,q]$. Clearly, $f$ is a $(6)$-total
coloring. Definition \ref{def:a-new-coloring-contain-9-colorings}
leads to $\chi\,''_{(6)}(G)\geq \chi\,''_{\varepsilon}(G)$ for
$\varepsilon=(s),(as)$, $s$, $as$, $2s$, $2as$.
\end{proof}

\subsection{Connections between known chromatic numbers}

We will build up some connections between known chromatic numbers ($\chi$, $\chi'$, $\chi''$, $\chi'_s$, $\chi\,'_{as}$, $\chi\,''_{as}$) and the new chromatic numbers ($\chi \,''_{\langle s\rangle}$, $\chi \,''_{\langle as\rangle}$, $\chi\,''_{2s}$, $\chi\,''_{2as}$, $ \chi \,''_{(as)}$, $\chi \,''_{(s)}$) defined in Definition  \ref{def:22} and Definition  \ref{def:33} in this subsection.

\begin{lem} \label{thm:N2-vdtc-basic-lemma}
Let $G\in \mathcal {F}_{3s}(n)$.

$(i)$ $\chi\,''_{2s}(G)\leq \chi \,''_{(s)}(G)+\chi(G)$ and
$\chi\,''_{2s}(G)\leq \chi \,''_{\langle s\rangle}(G)+\chi\,'(G)$;
$\chi\,''_{2as}(G)\leq \chi \,''_{(as)}(G)+\chi(G)$ and
$\chi\,''_{2as}(G)\leq \chi \,''_{\langle as\rangle}(G)+\chi\,'(G)$.

$(ii)$ $\chi\,''_{\varepsilon}(G)\leq \chi \,'_{s}(G)+\chi(G)$ for
$\varepsilon=(s),(as),s,as,2s,2as$ and $\chi\,''_{\mu}(G)\leq \chi
\,'_{as}(G)+\chi(G)$ for $\mu=(as),as,2as$.
\end{lem}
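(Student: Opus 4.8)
The plan is to prove every inequality by a single device: realize the right-hand parameter by a coloring that uses a palette $[1,k]$, and then superimpose an auxiliary \emph{proper} coloring of the part of $G$ that does not carry the distinguishing invariant, drawing its colors from a fresh palette $[k+1,k+\ell]$ disjoint from $[1,k]$. The union is the desired total coloring, and its distinguishing color set, restricted to $[1,k]$, is exactly the set that already distinguished the vertices.

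First I would handle $\chi\,''_{2s}(G)\leq \chi\,''_{(s)}(G)+\chi(G)$. Let $f$ be an e-partially $k$-vdtc of $G$ with $k=\chi\,''_{(s)}(G)$ on colors $[1,k]$, let $p\colon V(G)\to[k+1,k+\chi(G)]$ be a proper vertex coloring, and define $h$ by $h(e)=f(e)$ on edges and $h(u)=p(u)$ on vertices. Then $h$ is a proper total coloring: adjacent edges inherit properness from $f$, adjacent vertices from $p$, and a vertex and an incident edge get colors from the two disjoint palettes. Since $C(h,u)=C(f,u)\subseteq[1,k]$ while $C\langle h,u\rangle\subseteq[k+1,k+\chi(G)]$, we get $N_2[h,u]\cap[1,k]=C(f,u)$, so the hypothesis $C(f,u)\neq C(f,v)$ for distinct $u,v$ forces $N_2[h,u]\neq N_2[h,v]$, i.e.\ $h$ is a $\mu(k+\chi(G))$-coloring. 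The other three inequalities of $(i)$ are the variants of this construction: for the parameters indexed by $\langle s\rangle,\langle as\rangle$, whose invariant is $C\langle f,\cdot\rangle$, I would instead keep the vertex colors of $f$ and recolor the edges by a proper edge coloring into $[k+1,k+\chi\,'(G)]$, so that $N_2[h,u]\cap[1,k]=C\langle f,u\rangle$; and for the two ``adjacent'' statements one simply reads ``distinct $u,v\in V(G)$'' as ``$uv\in E(G)$'' throughout.

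For $(ii)$ a single coloring settles all six ($\varepsilon$) cases at once. I would take a vdec $g$ of $G$ with $k=\chi\,'_s(G)$ colors on $[1,k]$ and a proper vertex coloring $p\colon V(G)\to[k+1,k+\chi(G)]$, and set $h(e)=g(e)$, $h(u)=p(u)$; again $h$ is a proper total coloring. Because $C(h,u)=C(g,u)$, $C[h,u]\cap[1,k]=C(g,u)$ and $N_2[h,u]\cap[1,k]=C(g,u)$, while $C(g,u)\neq C(g,v)$ for all distinct $u,v$, the coloring $h$ satisfies (C\textbf{1}), (C\textbf{5}), (C\textbf{7}) and hence their edge-restricted weakenings (C\textbf{2}), (C\textbf{6}), (C\textbf{8}); this yields $\chi\,''_{\varepsilon}(G)\leq \chi\,'_s(G)+\chi(G)$ for $\varepsilon=(s),(as),s,as,2s,2as$. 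Replacing the vdec by an avdec gives only the edge-restricted distinguishing properties, which is precisely why the same recipe produces only $\chi\,''_{\mu}(G)\leq \chi\,'_{as}(G)+\chi(G)$ for $\mu=(as),as,2as$.

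The argument has no genuine obstacle; the work is purely organizational. The two points that need care are the routine check that $h$ is a \emph{proper total} coloring — this is exactly why the two palettes must be disjoint, so that no vertex and incident edge collide — and the elementary observation that intersecting the appropriate color set of $h$ with $[1,k]$ recovers the distinguishing set of the original coloring. One should also note that whenever a right-hand parameter is undefined (for instance $\chi\,''_{\langle s\rangle}(G)$ when $G$ has two vertices with equal closed neighborhoods, cf.\ Lemma~\ref{thm:xxxxxxxx}), the corresponding inequality holds vacuously.
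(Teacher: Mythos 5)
Your proof is correct and follows essentially the same route as the paper: keep the coloring that carries the distinguishing invariant (edges for the $(s)/(as)$ and $\chi\,'_s/\chi\,'_{as}$ cases, vertices for the $\langle s\rangle/\langle as\rangle$ cases) and superimpose a proper coloring of the other part from a disjoint palette, so that intersecting the relevant color set with the original palette recovers the distinguishing set. Your explicit remark that the inequalities involving $\chi\,''_{\langle s\rangle}$, $\chi\,''_{\langle as\rangle}$ hold vacuously when those parameters are undefined is a welcome clarification the paper leaves implicit.
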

\begin{proof} (1) Notice that the method for showing $\chi\,''_{2s}(G)\leq \chi
\,''_{(s)}(G)+\chi\,'(G)$ can be used to show the rest three
inequalities in the assertion $(i)$. Let $f$ be an e-partially
$k$-vdtc of $G$ with $k= \chi \,''_{(s)}(G)$. Under this coloring we
have $C(f,u)\neq C(f,v)$ for distinct $u,v\in V(G)$. Now we define
another total coloring $g$ of $G$ as: $g(e)=f(e)$ for $e\in E(G)$,
$g(u)=k+f\,'(u)$ for $u\in V(G)$, where $f\,'$ is a proper vertex
$\chi\,'(G)$-coloring of $G$. Notice that $\{g(x):x\in V(G)\}\cap
\{g(e):e\in E(G)\}=\emptyset$. Hence, $C(f,u)\neq C(f,v)$ means that
$N_2[g,u]\neq N_2[g,v]$ for distinct $u,v\in V(G)$.

(2) Since the proofs of two inequalities in the assertion $(ii)$
are very similar, we show only the first one. Write $\chi
\,'_{s}=\chi \,'_{s}(G)$ and $\chi=\chi(G)$. We color the edges of
$G$ with colors of $C=[1,\chi \,'_{s}]$ such that two incident edges
$e$ and $e\,'$ of $G$ are assigned distinct colors, and for any two
vertices $u$ and $v$ of $G$ the set of the colors assigned to the
edges being incident to $u$ is not equal to the set of the colors
assigned to the edges being incident to $v$. We, now, define  a
coloring $f$ for all vertices of $G$ with colors of $C\,'=\{\chi
\,'_{s}+1,\chi \,'_{s}+2,\dots ,\chi \,'_{s}+\chi\}$ such that the
color assigned to $u$ is different to the color assigned to $v$ for
arbitrary two vertices $u$ and $v$ of $V(G)$. From $C\cap
C\,'=\emptyset$ and the coloring way used above, we obtain
$C(f,u)\neq C(f,v)$, $C[f,u]\neq C2[f,v]$ and $N_2[f,u]\neq
N_2[f,v]$ for distinct $u,v\in V(G)$. Hence, $f$ is a mixed
$(6)$-total coloring, and moreover $\chi\,''_{\varepsilon}(G)\leq
\max\{f(x): x\in E(G)\cup V(G)\}=\chi \,'_{s}+\chi$ for
$\varepsilon=(s),(as),s,as,2s,2as$.
\end{proof}

Let $K_{m,n}$ be a complete bipartite graph for $m\geq n\geq 2$. By
Lemma \ref{thm:observation000} we have $\chi\,''_{2s}(K_{m,n})\leq
m+4$ if $m=n$ and $\chi\,''_{2s}(K_{m,n})\leq m+3$ if $m>n\geq 2$,
since $\chi(K_{m,n})=2$, and  $\chi \,'_{s}(K_{m,n})=m+2$ for $m=n$
and $\chi \,'_{s}(K_{m,n})=m+1$ for $m>n\geq 2$ (cf.
\cite{Burris-Schelp}). For example, $\chi\,''_{2s}(K_{3,2})=\chi
\,'_{s}(K_{3,2})+\chi(K_{3,2})=4+2$. Notice that we have used the
colors on the vertices being divided completely from the colors of
the edges of $G$ in the proof of Lemma
\ref{thm:N2-vdtc-basic-lemma}, so the upper bounds of
$\chi\,''_{2s}(G)$ in Lemma \ref{thm:N2-vdtc-basic-lemma} are not
optimal. A subset $V^{*}$ of $V(G)$ is an edge-covering set if any
edge $uv$ of $G$ holds $u\in V^{*}$ or $v\in V^{*}$. $G[V^{*}]$ indicates a
vertex induced subgraph of $V(G)$ such that two ends of each
edge of $G[V^{*}]$ both are in $V^{*}$.

\begin{lem} \label{thm:c4-vdtc-theorem-4}
Let $V^{*}$ be a smallest edge-covering set of a graph $G\in
\mathcal {F}_{3s}(n)$. Then there exists a bipartite subgraph $H$ with
bipartition $(V^{*},V(G)\setminus V^{*})$. We have
$\chi\,''_{2s}(G)\leq \chi\,''(G[V^{*}])+\chi \,'_{s}(H)+1$ and
$\chi\,''_{2as}(G)\leq \chi\,''(G[V^{*}])+\chi \,'_{as}(H)+1$ if
$H\in \mathcal {F}_{3s}(n)$.
\end{lem}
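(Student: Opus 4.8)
The plan is to build the desired total coloring of $G$ by combining three pieces of color information drawn from three disjoint palettes. First I would fix a smallest edge-covering set $V^{*}$ of $G$; by definition every edge of $G$ has at least one endpoint in $V^{*}$, so the edges of $G$ split into those lying inside $G[V^{*}]$ and those of the bipartite subgraph $H$ with bipartition $(V^{*},\, V(G)\setminus V^{*})$ consisting of all edges with exactly one endpoint in $V^{*}$. (Edges with both endpoints outside $V^{*}$ do not exist.) I would color the edges of $G[V^{*}]$ together with \emph{all} vertices of $G$ using a proper total coloring of $G[V^{*}]$, extended so that it uses colors from $[1,\chi\,''(G[V^{*}])]$; actually it is cleaner to take a proper total $\chi\,''(G[V^{*}])$-coloring of $G[V^{*}]$ and, separately, handle the remaining vertices — but since $V^{*}$ is edge-covering, $V(G)\setminus V^{*}$ is an independent set, so those leftover vertices can be colored with a single already-used color without violating properness. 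Then I would color the edges of $H$ with a vertex-distinguishing proper edge coloring (vdec) of $H$ using a fresh palette $[\chi\,''(G[V^{*}])+1,\ \chi\,''(G[V^{*}])+\chi\,'_s(H)]$, which exists since $H\in\mathcal{F}_{3s}(n)$, and finally use one extra new color $\chi\,''(G[V^{*}])+\chi\,'_s(H)+1$ as a ``reserve'' to repair any remaining properness conflicts at the interface.

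The key steps, in order, are: (1) verify the three palettes are pairwise disjoint and that the combined assignment is a proper total coloring of $G$ — properness inside $G[V^{*}]$ and inside $H$ is inherited, and an edge of $G[V^{*}]$ meeting an edge of $H$ at a vertex gets a color from a different block, so no conflict arises; the one extra color is there precisely to recolor a vertex of $V(G)\setminus V^{*}$ or an edge of $H$ should a clash with the $G[V^{*}]$-coloring occur; (2) show condition (C\textbf{7}), i.e. $N_2[f,u]\neq N_2[f,v]$ for distinct $u,v\in V(G)$. The crucial observation here is that for a vertex $u\in V(G)$ the set $C(f,u)$ contains the colors of the $H$-edges at $u$, and these colors come from the vdec of $H$; hence for two vertices $u,v$ that are both in $V^{*}$ or both in $V(G)\setminus V^{*}$ the sets of $H$-colors incident to them already differ (that is exactly what ``vertex-distinguishing'' buys us), so $C(f,u)\neq C(f,v)$, and therefore $N_2[f,u]\supseteq C(f,u)$ forces $N_2[f,u]\neq N_2[f,v]$ after checking the $H$-colors are not accidentally supplied to the other vertex through $C\langle f,\cdot\rangle$ — but $C\langle f,\cdot\rangle$ only contributes vertex colors, which all lie in the first block, so no $H$-color ever enters $N_2[f,w]$ except via an incident $H$-edge. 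For $u\in V^{*}$ and $v\in V(G)\setminus V^{*}$, one distinguishes them by noting $v$ has no incident edge of $G[V^{*}]$ while the relevant color bookkeeping at $u$ does involve the first block, or in the worst degenerate case by invoking the membership of the reserve color; (3) for the ``$as$'' version replace the vdec of $H$ by an adjacent-vertex-distinguishing edge coloring (avdec) and restrict the distinguishing requirement in step (2) to adjacent pairs $uv\in E(G)$, where the same argument applies since every edge of $G$ either lies in $G[V^{*}]$ (handled by the proper total colors being distinct enough, together with the reserve color) or lies in $H$ (handled directly by the avdec).

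I expect the main obstacle to be step (2) in the case where both $u$ and $v$ lie in $V(G)\setminus V^{*}$, or where one of them is an isolated-in-$H$ vertex of $V^{*}$: a vertex of $V(G)\setminus V^{*}$ has \emph{all} of its incident edges in $H$, so the vdec of $H$ distinguishes such vertices from each other; a vertex of $V^{*}$ may have incident edges in both $G[V^{*}]$ and $H$. The delicate point is to make sure that the ``$H$-part'' of $N_2[f,u]$ is genuinely recoverable from $N_2[f,u]$ — i.e. that a color from the second block appearing in $N_2[f,u]$ really does come from an incident $H$-edge and not from somewhere else — which holds because vertex colors live in block one and no vertex is colored from block two or three (the reserve color is only ever put on a vertex of the independent set $V(G)\setminus V^{*}$ or on an $H$-edge, and in the former subcase one argues separately). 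Handling the reserve color carefully, and the trees/small-degenerate configurations where $G[V^{*}]$ or $H$ might be edgeless, is where the bookkeeping has to be done with care; everything else is a routine palette-disjointness check.
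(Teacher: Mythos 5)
Your construction is essentially the paper's: give $G[V^{*}]$ a proper total coloring with $[1,m]$, $m=\chi\,''(G[V^{*}])$, give the edges of the spanning bipartite graph $H$ a vdec (resp.\ avdec) on a disjoint palette, spend one further color on the independent set $V(G)\setminus V^{*}$, and deduce the distinguishing property from the fact that second-block colors can enter $N_2[f,w]$ only through $H$-edges incident with $w$. Two remarks on where your write-up is weaker than it needs to be. First, the ``already-used color plus reserve repair'' for the vertices of $V(G)\setminus V^{*}$ is an unnecessary detour: the paper simply assigns every such vertex the single brand-new color $m+1$ (this is exactly the ``$+1$'' in the bound), which is automatically proper because that color appears nowhere else, and then the reserve color never has to touch an $H$-edge, so your worry about damaging the vdec disappears. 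Second, your case split in step (2) is both unnecessary and, for a cross pair $u\in V^{*}$, $v\in V(G)\setminus V^{*}$, not a proof as stated: $u$ may have no incident edge of $G[V^{*}]$ at all, and block-one colors occur in both $N_2$-sets, so ``the bookkeeping at $u$ involves the first block'' distinguishes nothing; but no separate argument is needed, since $V(H)=V(G)$ and a vdec distinguishes \emph{every} pair of distinct vertices of $H$, cross pairs included, so the argument you gave for same-side pairs already covers all pairs (with at most one vertex isolated in $H$, which is why $H\in \mathcal {F}_{3s}(n)$ is assumed). Finally, for the second inequality, be aware that an avdec of $H$ says nothing about an edge $xy$ with both ends in $V^{*}$, and ``the proper total colors being distinct enough'' is not a justification; the paper itself only asserts that the same method proves the $\chi\,''_{2as}$ bound, so this case is glossed over equally on both sides, but you should not present it as handled.
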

\begin{proof} We take a smallest edge-covering set $V^{*}$
of a graph $G\in \mathcal {F}_{3s}(n)$. Then  any $u\in
V\,'=V(G)\setminus V^{*}$ is adjacent to a vertex $v$ of $V^{*}$,
and $V\,'$ is an independent set of $G$ by the definition of an
edge-covering set.

Clearly, there exists a bipartite subgraph $H$ of $G$ that is defined as
$V(H)=V(G)$ and $E(H)=\{uv: u\in V^{*},\ v\in V\,'\}$. Write
$m=\chi\,''(G[V^{*}])$ and $t=\chi \,'_{s}(H)$. Suppose that $H\in
\mathcal {F}_{3s}(n)$. We use the colors of $S_1=[1,m]$ to color
properly every elements of the vertex-induced graph $G[V^{*}]$ and
color every vertex of $V\,'$ with color $m+1$. Consequently, we use
the colors of $S_2=\{m+2,m+3,\dots , m+1+t\}$ to color properly all
edges of $H$ such that for any two vertices $u$ and $v$ of $H$, the
set of the colors assigned to the edges incident to $u$ differs to
the set of the colors assigned to the edges incident to $v$. Hence,
we obtain a proper total coloring $f$ of $G$ such that $N_2[f,u]\neq
N_2[f,v]$ for distinct $u,v\in V(G)$ since $S_1\cap S_2=\emptyset$
and $H$ is a spanning subgraph of $G$, and confirm
$\chi\,''_{2s}(G)\leq m+t+1$.

The above method is valid for proving the second inequality of the
lemma.
\end{proof}

\subsection{New chromatic numbers and maximum degrees of graphs}

\begin{thm} \label{thm:xxxxxxxxxx}
For a bipartite graph $G\in \mathcal {F}_{3s}(n)$, we have
$\chi\,''_{(as)}(G)\leq \Delta (G)+3$; and $\chi\,''_{(s)}(G)\leq
\chi\,'_{s}(G)+\varepsilon$ for $\varepsilon=1$ if
$\chi\,'_{s}(G)>\Delta(G)$, $\varepsilon=2$ otherwise.
\end{thm}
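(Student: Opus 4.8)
The plan is to prove the two assertions separately, since they concern different color sets. For the first one, $\chi\,''_{(as)}(G)\leq \Delta(G)+3$, I would start from a known total-coloring result for bipartite graphs: every bipartite graph $G$ satisfies $\chi\,''(G)\leq \Delta(G)+2$ (this is the Total Coloring Conjecture for bipartite graphs, which is known), so fix a proper total $(\Delta+2)$-coloring $f$ of $G$. The condition (C\textbf{2}) only requires $C(f,x)\neq C(f,y)$ for each edge $xy$, i.e. the \emph{edge}-color sets of the two ends of every edge differ. Using the bipartition $(X,Y)$, I would add one new color $\Delta+3$ and recolor along one side so that adjacent vertices are separated: for instance, recolor a carefully chosen edge incident to each ``bad'' vertex, or — more robustly — use the new color as a reservoir to break ties. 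The cleanest route is probably: take a proper edge coloring of $G$ with $\Delta+1$ colors (König's theorem gives $\chi\,'(G)=\Delta$ for bipartite $G$, but one extra color buys slack for adjacent vertex distinguishing), invoke the adjacent-vertex-distinguishing edge coloring bound $\chi\,'_{as}(G)\le \Delta+2$ for bipartite graphs (a known theorem of Balister et al.), color the edges with those $\Delta+2$ colors so that $C(f,x)\neq C(f,y)$ on every edge, then color the vertices with the single extra color $\Delta+3$ — but a proper total coloring forbids a vertex and an incident edge from sharing a color, so instead color the vertices using two of the already-used colors via a proper $2$-vertex-coloring shifted appropriately; the care needed is only to keep the total coloring proper, while the edge-set condition is inherited from the avdec. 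So the total count is $\Delta+2$ plus at most one, giving $\Delta+3$.

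For the second assertion, $\chi\,''_{(s)}(G)\leq \chi\,'_s(G)+\varepsilon$, this is close in spirit to Lemma~\ref{thm:N2-vdtc-basic-lemma}(ii), which already gives $\chi\,''_{(s)}(G)\le \chi\,'_s(G)+\chi(G)$. The improvement exploits that $G$ is bipartite, so $\chi(G)=2$ and the two vertex-color classes need only two new colors — and in fact, when $\chi\,'_s(G)>\Delta(G)$ we can afford to reuse one of the vdec-colors on the vertices without destroying properness or the vertex-distinguishing property, saving one color. Concretely: take a vdec $h$ of $G$ with $\chi\,'_s(G)$ colors, so $C(h,u)\neq C(h,v)$ for all distinct $u,v$. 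Define $f$ agreeing with $h$ on edges and coloring $V(G)$ with a proper $2$-coloring on colors $\{\chi\,'_s+1,\chi\,'_s+2\}$; since these are disjoint from the edge palette, $C(f,u)=C(h,u)$ still separates all vertex pairs and $f$ is a proper total coloring, giving the $\varepsilon=2$ bound. When $\chi\,'_s(G)>\Delta(G)$, since $|C(h,u)|=\ud_G(u)\le\Delta<\chi\,'_s$, every vertex $u$ has at least one color in $[1,\chi\,'_s]$ missing at $u$; the idea is to color each $u$ with a color not appearing on any edge incident to $u$ (keeping properness) and then add only one genuinely new color $\chi\,'_s+1$ to repair the at-most-one remaining conflict between the vertex $2$-coloring and adjacency — yielding $\varepsilon=1$. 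The vertex-distinguishing condition (C\textbf{1}) is automatic throughout because it depends only on the edge colors, which are untouched.

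The main obstacle I expect is the $\varepsilon=1$ case: one must simultaneously (a) give every vertex a color missing from its incident edges, (b) make the resulting vertex coloring proper on $G$ (adjacent vertices get distinct colors), and (c) do all this with only one extra color beyond $\chi\,'_s(G)$. Step (a) alone is easy by a greedy/missing-color argument, and (b) would normally cost $\chi(G)=2$ extra colors, but here the hypothesis $\chi\,'_s(G)>\Delta(G)$ must be leveraged to show that the ``missing-color'' freedom is large enough to also achieve properness with just one reserve color — essentially a list-coloring style argument where each vertex's list (the colors missing at it from the edge palette) has size $\ge \chi\,'_s-\Delta\ge 1$, augmented by the single new color, and bipartiteness (hence $2$-choosability-type slack, or a direct alternating argument along the two sides) closes the gap. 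If that fine-tuning proves delicate, a safe fallback is to note that adding the new color to the vertices of just one side of the bipartition and using a single old missing-color on the other side suffices, since within one side there are no edges to worry about.
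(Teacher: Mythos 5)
Your overall strategy is the paper's: color the edges with a known bipartite bound ($\chi\,'_{as}(G)\le \Delta(G)+2$ of Balister et al.\ for the first claim, a vdec with $\chi\,'_s(G)$ colors for the second) and then spend at most one or two additional colors on the vertices, exploiting the bipartition. Your treatment of the second assertion is correct, and your ``safe fallback'' --- give every vertex of one side the single new color and every vertex of the other side a color missing from its incident edges (there are no edges inside a side, so properness only needs checking across the bipartition and at vertex--edge incidences) --- is exactly the paper's construction; the hypothesis $\chi\,'_s(G)>\Delta(G)$ is precisely what guarantees a missing color at every vertex, and two fresh colors give $\varepsilon=2$ otherwise. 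Condition (C\textbf{1}) is indeed automatic because the edge colors are untouched.

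The one genuine flaw is in your write-up of the first assertion. Coloring all vertices with the single extra color $\Delta(G)+3$ fails because adjacent vertices would then share a color, not because of vertex--edge conflicts (no edge carries $\Delta(G)+3$); and your repair, ``color the vertices using two of the already-used colors via a proper $2$-vertex-coloring,'' does not keep the total coloring proper in general, since a vertex may be incident to an edge carrying its class color, and as stated it would not use the extra color at all. The correct repair is the same device as your fallback: the edges use $\Delta(G)+2>\Delta(G)\ge \ud_G(x)$ colors, so every $x$ in one part has some color of $[1,\Delta(G)+2]$ missing at $x$; give $x$ such a color and give every vertex of the other part the new color $\Delta(G)+3$. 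The edge colors are unchanged, so $C(g,x)\neq C(g,y)$ for every edge $xy$ is inherited from the avdec, and $\chi\,''_{(as)}(G)\le\Delta(G)+3$ follows --- this is precisely how the paper argues, and the identical scheme closes your first paragraph.
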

\begin{proof} In the article \cite{Balister2}, a bipartite graph $G$ holds $\chi
\,'_{as}(G)\leq \Delta(G)+2$. Let $(X,Y)$ be the bipartition of the
bipartite graph $G$ and let $f$ be an e-partially $k$-avdtc of $G$
with $k=\Delta(G)+2$. We define another total coloring $g$ of $G$
as: $g(xy)=f(xy)$ for $xy\in E(G)$, $g(y)=k+1$ for $y\in Y$, and
$g(x)\in [1,k]\setminus \{f(xu):u\in N(x)\}$ for $x\in X$. Clearly,
$g$ is an e-partially $(k+1)$-avdtc of $G$, which means
$\chi\,''_{(as)}(G)\leq \Delta (G)+3$. By the same way used above we
can show the second inequality.
\end{proof}

\begin{thm} \label{thm:basic-results-mu-colorings}
For a graph $G\in \mathcal {F}_{3s}(n)$ with the independent number
$\alpha(G)$, we have

$(i)$ $\chi\,''_{2s}(G)\leq 4\Delta (G)$ if  $G\not \in \{C_{2m+1},K_n\}$.

$(ii)$ $\chi\,''_{2s}(G)\leq 2\Delta (G)+5$ if
$\delta(G)>\frac{n}{3}$ and $G\not \in \{C_{2m+1},K_n\}$.

$(iii)$ $\chi\,''_{2s}(G)\leq n-\alpha(G)+\chi \,'_{s}(H)+2$, where
$H$ is a bipartite subgraph with bipartition $(V^*,V(G)\setminus
V^*)$ generated from a smallest edge-covering set $V^*$.

$(iv)$ If diameter $D(G)\geq 3$, then $\chi\,''_{2s}(G)\geq m$ where
$m$ is a the smallest integer such that ${{m}\choose \delta+1}\geq
\ud_G(u)+\ud_G(v)+2$ for each pair of vertices $u,v$ holding
$\text{d}(u,v)\geq 3$.

$(v)$ $\chi\,''_{2s}(G)\leq n-\alpha(G)+\Delta(G)+7$ if
$\delta(G)>\frac{2}{3}n$ and $G\not \in \{C_{2m+1},K_n\}$.

$(vi)$ $\chi\,''_{2s}(G)\leq 2n-1$ if $G=K_n$.

$(vii)$ $\chi\,''_{2s}(G)\leq \chi\,''_{2s}(P_n)+\Delta(G)$ if $G$
is hamiltonian.

$(viii)$ $\chi\,''_{2s}(G)\leq n_1(T)+\Delta(G)+1$ if $G$ contains a
spanning tree $T$ with $n_2(T)=0$.
\end{thm}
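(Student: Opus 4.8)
I address part $(viii)$; parts $(i)$--$(vii)$ are consequences of Lemma~\ref{thm:observation000}, the edge‑covering Lemma~\ref{thm:c4-vdtc-theorem-4} and Vizing's theorem fed into the same ``fresh‑palette'' template, so I sketch only the last item. The plan is to combine a reduction from $G$ to its spanning tree $T$ (the tree analogue of part $(vii)$, which does exactly this for a spanning path) with the structural bound $\chi\,''_{2s}(T)\le n_1(T)+1$ for every tree $T$ with $n_2(T)=0$.

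For the reduction, set $E'=E(G)\setminus E(T)$ and $G'=(V(G),E')$. Since $T$ is spanning, $\ud_T(v)\ge 1$ for every $v$, so $\Delta(G')\le\Delta(G)-1$ and Vizing gives $\chi\,'(G')\le\Delta(G)$. Take a $\mu(k_0)$‑colouring $f$ of $T$ with $k_0=\chi\,''_{2s}(T)$ on the palette $[1,k_0]$, keep $f$ on $V(G)\cup E(T)$, and colour $E'$ by a proper edge colouring drawn from the disjoint palette $[k_0+1,k_0+\Delta(G)]$; call the result $g$. Then $g$ is a proper total colouring of $G$ (at each vertex the $E'$‑edges are mutually distinctly coloured and, the palettes being disjoint, differ from the vertex colour and from the $T$‑edges), so $\chi\,''_{2s}(G)\le k_0+\Delta(G)=\chi\,''_{2s}(T)+\Delta(G)$, provided $g$ also separates all the $N_2$‑sets.

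For the tree bound, let $T$ have $n_2(T)=0$, so $|V(T)|\ge 4$ and $T\in\mathcal {F}_{3s}$. Write $\ell=n_1(T)$, let $L$ be the leaves and $W=V(T)\setminus L$; every $w\in W$ has $\ud_T(w)\ge 3$, whence a handshake count gives $|W|\le\ell-2$, and $\Delta(T)\le\ell$ (a tree has at least $\Delta$ leaves). I would colour $T$ with the $\ell+1$ colours of $[1,\ell+1]$ — enough for a proper total colouring since $\chi\,''(T)=\Delta(T)+1\le\ell+1$ — observing that for a leaf $v$ with unique neighbour $w_v$ and leaf‑edge $e_v$ the three colours $f(e_v),f(v),f(w_v)$ are pairwise distinct, so $|N_2[f,v]|=3$, while $|N_2[f,w]|\ge\ud_T(w)+1\ge 4$ for $w\in W$. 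Hence the $\le\ell-2$ internal $N_2$‑sets are automatically separated from the $\ell$ leaf $N_2$‑sets, and the task reduces to $(a)$ separating the few internal sets, which has abundant slack and is handled by local recolouring of $W$ and its incident edges, and $(b)$ separating the $\ell$ leaf sets $\{f(e_v),f(v),f(w_v)\}$; for $(b)$ one gives the $\le\Delta(T)\le\ell$ leaf‑edges at a common $w$ distinct colours (forced anyway) and uses the one spare colour to make the pairs $\{f(e_v),f(v)\}$ distinct among them, while globally keeping leaf‑edge colours clear of the $\le\ell-2$ colours on $W$ as far as room allows, which confines any remaining collision to leaves at equally coloured internal vertices. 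Alternatively one may quote the exact values of $\chi\,''_{2as}$ for trees from \cite{Zhang-Cheng-Yao-Li-Chen-Xu2008} and pay one extra colour to upgrade a $\mu_e$‑colouring to a $\mu$‑colouring.

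The reduction looks routine, but the genuine obstacle hides in the clause ``provided $g$ also separates all $N_2$‑sets'': a non‑tree edge at $v$ contributes to $N_2[g,v]$ not only its fresh colour but also the \emph{old} colour $f(v')$ of its other endpoint $v'$, so $N_2[g,v]\cap[1,k_0]$ is in general strictly larger than $N_2[f,v]$, and $f$ being a $\mu$‑colouring of $T$ no longer suffices by itself. The way around this is to split $V(G)$ according to whether a vertex is incident to a non‑tree edge: a vertex with none has $N_2[g,\cdot]=N_2[f,\cdot]\subseteq[1,k_0]$, and these are separated by the tree $\mu$‑colouring and are automatically separated from every vertex that does use a fresh colour; the remaining vertices must be separated using the fresh colours on $E'$, and the delicate case is a vertex with exactly one non‑tree edge, where a single fresh colour together with a ``polluted'' old set must still differ from its competitors. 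Making this last step airtight — by choosing the tree colouring, and the proper edge colouring of $G'$, so as to dodge the few possible near‑collisions — is where the real work lies; everything else is bookkeeping.
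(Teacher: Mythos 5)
Your treatment of part $(viii)$ follows the same route as the paper: decompose $G$ into a spanning tree $T$ and the leftover edges $G-E(T)$, colour $T$ with a $\mu(k_0)$-colouring, colour $G-E(T)$ properly with a disjoint palette of at most $\Delta(G)$ fresh colours (Vizing, using $\Delta(G-E(T))\le\Delta(G)-1$), and invoke the bound $\chi\,''_{2s}(T)\le n_1(T)+1$ for trees with $n_2(T)=0$; this is exactly the combination of (\ref{eqa:new-results-spanningtree-subgraphs}) and (\ref{eqa:new-results-spanningtree-subgraphs11}) in the paper. But as a proof your proposal is incomplete, and the gap you yourself flag is genuine: a non-tree edge $vv'$ adds to $N_2[g,v]$ not only its fresh colour but also the old vertex colour $f(v')$, so $N_2[g,v]\cap[1,k_0]$ can strictly contain $N_2[f,v]$, and two vertices separated under $f$ may collide under $g$. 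Hence the key inequality $\chi\,''_{2s}(G)\le \chi\,''_{2s}(T)+\chi\,'(G-E(T))$ is never actually established in your write-up -- you explicitly defer ``the real work'' of dodging these collisions. (It is fair to note that the paper does no better: it asserts (\ref{eqa:new-results-spanningtree-subgraphs}) without proof, so your diagnosis pinpoints a weakness the paper shares; but that does not make your argument complete.)

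Two further points. First, your tree bound $\chi\,''_{2s}(T)\le n_1(T)+1$ is also only a sketch: the separation of the leaf triples $\{f(e_v),f(v),f(w_v)\}$ and the ``local recolouring'' of internal vertices are described qualitatively, with the remaining collisions acknowledged but not eliminated; and the fallback of quoting the exact values of $\chi\,''_{2as}$ for trees from \cite{Zhang-Cheng-Yao-Li-Chen-Xu2008} does not rescue it, since a $\mu_e$-colouring only distinguishes adjacent pairs and cannot in general be upgraded to a $\mu$-colouring (all pairs distinguished) at the cost of one extra colour. Second, dismissing $(i)$--$(vii)$ as ``the same fresh-palette template'' is too quick: $(i)$, $(ii)$, $(v)$ rest on Lemma \ref{thm:N2-vdtc-basic-lemma} together with specific external bounds on $\chi\,'_s$ ($3\Delta$, the large-minimum-degree bound, Erd\H{o}s' half-degree bipartite spanning subgraph) plus Brooks' theorem, $(iii)$ uses Lemma \ref{thm:c4-vdtc-theorem-4}, and $(iv)$ is a counting \emph{lower} bound on $N_2$-sets of two vertices at distance at least $3$, which is not a palette construction at all; none of these is covered by your sketch.
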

\begin{proof} $(1)$ In the article \cite{Akbafi-Bidkhori-Nosrati}, the authors show that a simple
graph $G$ without isolated edges holds $\chi \,'_{s}(G)\leq 3\Delta
(G)$. By the Brooks' theorem, we obtain the assertion $(i)$.

$(2)$ Since $G\not \in \{C_{2m+1},K_n\}$, the assertion $(ii)$
follows from $\chi(G)\leq \Delta(G)$ and a result of the article
\cite{Bazgan-Benhamdine-Li-Wozniak}: if $\delta(G)>\frac{1}{2}|G|$,
then $\chi \,'_{s}(G)\leq \Delta (G)+5$.

$(3)$ Let $\alpha(G)$ and $\beta(G)$ be the vertex-independent
number and the edge-covering number of $G$, respectively. Therefore,
$\alpha(G)+\beta(G)=|G|$ (cf. \cite{Bondy-Murty-new}). Notice that
$|G[V^{*}]| =\beta(G)$ and $\chi\,''(G[V^{*}])\leq \beta(G)+1$,
since $\chi\,''(K_{2m})=\chi\,''(K_{2m+1})=2m+1$. By Lemma
\ref{thm:c4-vdtc-theorem-4} we obtain the assertion $(iii)$, that is,
$\chi\,''_{2s}(G)\leq \beta(G)+\chi \,'_{s}(H)+2$, or
$\chi\,''_{2s}(G)\leq |V(G)|-\alpha(G)+\chi \,'_{s}(H)+2$.

$(4)$ Notice that $\text{d}(u,v)\geq 3$ for distinct $u,v\in V(G)$.
It is clear that $|N(u)\cup N(v)|=\ud_G(u)+\ud_G(v)$, this shows
that we need at least $\ud_G(u)+\ud_G(u)+2$ distinct colors. The
smallest case is ${{m}\choose \delta+1}\geq \ud_G(u)+\ud_G(v)+2$, so
the assertion $(iv)$ holds true, as desired.

$(5)$ To show that $G$ contains a bipartite spanning graph $H$ with
$\ud_H(u)\geq \frac{1}{2}\ud_G(u)$ for every $u\in V(H)=V(G)$, we
make a partition $V(G)=S_1\cup S_2$ for $S_1\cap S_2=\emptyset $
such that the cardinality of subset $E\,'=\{uv: u\in S_1, v\in
S_2\}$ is as large as possible, and obtain an edge induced graph
$G[E\,']$ over $E\,'$. If it is not that $2\ud _H(u)\geq \ud _G(u)$
for every vertex $u\in V(H)=V(G)$. Hence, there is a vertex $v_0\in
S_1$ such that $2\ud _H(v_0)< \ud _G(v_0)$, so we can take
$S\,^*_1=S_1\setminus \{v_0\}$ and $S\,^*_2=S_2\cup \{v_0\}$, and
get $E\,^*=\{uv: u\in S\,^*_1, v\in S\,^*_2\}$ such that
$|E\,^*|>|E\,'|$; a contradiction with the choice of $E\,'$. This
bipartite graph $H$ was discovered first by Erd\"{o}s. Since
$\ud_H(u)\geq \frac{1}{2}\ud_G(u)\geq \frac{n}{3}$ and $\chi(G)\leq
\Delta(G)$ according to $G\not \in \{C_{2m+1},K_n\}$, the assertion
$(v)$ follows from the assertion $(iii)$, Lemma
\ref{thm:c4-vdtc-theorem-4} and a result of the article
\cite{Bazgan-Benhamdine-Li-Wozniak} stated in $(P2)$ above.

$(6)$ Let $T$ be a spanning tree of $G$. Then
\begin{equation}\label{eqa:new-results-spanningtree-subgraphs}
\chi\,''_{2s}(G)\leq \chi\,''_{2s}(T)+\chi\,'(G-E(T)),
\end{equation}
and
\begin{equation}\label{eqa:new-results-spanningtree-subgraphs11}
\Delta(G-E(T))\leq \Delta(G)-1,\quad \chi\,'(G-E(T))\leq \Delta(G)
\end{equation}
by Vizing's theorem. If $G=K_n$, we have a star $T=K_{1,n-1}$.
Furthermore, $\chi\,''_{2s}(K_{1,n-1})=n$, $\chi\,'(G-E(T))=n-1$.
The assertion $(vi)$ follows from
(\ref{eqa:new-results-spanningtree-subgraphs}).

$(7)$ Notice that $G$ contains a Hamilton path $P_n$ which is a
spanning tree of $G$, and $\chi\,'(G-E(P_n))\leq \Delta(G)$ by
(\ref{eqa:new-results-spanningtree-subgraphs11}). The assertion
$(vii)$ follows from (\ref{eqa:new-results-spanningtree-subgraphs}).

$(8)$  If $G$ contains a spanning tree $T$ with $n_2(T)=0$, then
$n_1(T)\leq \chi\,''_{2s}(T)\leq n_1(T)+1$.  The last assertion
follows from both
(\ref{eqa:new-results-spanningtree-subgraphs}) and
(\ref{eqa:new-results-spanningtree-subgraphs11}).

The proof of the theorem is complete.
\end{proof}

\begin{thm} \label{thm:some-results-mu-e-colorings}
For a graph $G\in \mathcal {F}_{3s}(n)$ we have

$(i)$ $\chi\,''_{2as}(G)\leq 8$ if $\Delta(G)\leq 3$ and $G\not \in
\{C_{2m+1},K_3\}$.

$(ii)$ $\chi\,''_{2as}(G)\leq \Delta(G)+4$ if $G$ is bipartite.

$(iii)$ $\chi\,''_{2as}(G)\leq 2\Delta(G)+3$ if $G$ is a
$3$-colorable, Hamilton graph, and $G\not \in \{C_{2m+1},K_n\}$.

$(iv)$ $\chi\,''_{2as}(G)\leq 2\Delta(G)+2$ if $G$ is a planar graph
$G$ with girth $g\geq 6$ and $\Delta (G) \geq 3$.

$(v)$ $G$ has a spanning subgraph $G^*$ with $|E(G^*)|<
\frac{1}{2}|E(G)|$ such that $$\chi\,''_{2as}(G)\leq
\chi\,''(G^*)+\Delta(G)-\delta(G^*)+2.$$
\end{thm}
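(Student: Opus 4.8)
The plan is to obtain all five bounds from the \emph{disjoint–palette decomposition} already used in Lemmas \ref{thm:N2-vdtc-basic-lemma} and \ref{thm:c4-vdtc-theorem-4}: colour the edges of $G$ (or of a suitable spanning subgraph) by a proper edge colouring that distinguishes adjacent vertices, colour the remaining elements from a fresh palette so the total colouring stays proper, and observe that for an edge $xy$ carried by the avdec the two vertices already differ on the part of $N_2[f,\cdot]$ living in the avdec palette, so (C\textbf{8}) is free. In particular, the master inequality $\chi\,''_{2as}(G)\le\chi\,'_{as}(G)+\chi(G)$ from Lemma \ref{thm:N2-vdtc-basic-lemma}(ii) reduces each of (i), (ii), (iv) to known upper bounds on $\chi\,'_{as}$ and $\chi$ for the family at hand.

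For (i) I would note that if $\Delta(G)\le 3$ and no component of $G$ is an odd cycle, the subcubic adjacent-vertex-distinguishing theorem (\cite{Balister-Bollobas-Schelp}) gives $\chi\,'_{as}(G)\le 5$, while Brooks' theorem gives $\chi(G)\le 3$; the graph $K_4$ is permitted by the hypotheses but has $\chi(K_4)=4$, so it must be handled separately via $\chi\,''_{2as}(K_4)=\chi\,''_{2s}(K_4)\le 2\cdot 4-1$ from Theorem \ref{thm:basic-results-mu-colorings}(vi). Hence $\chi\,''_{2as}(G)\le 5+3=8$. For (ii), a bipartite $G$ has $\chi(G)=2$ and $\chi\,'_{as}(G)\le\Delta(G)+2$ (\cite{Balister2}), so $\chi\,''_{2as}(G)\le\Delta(G)+4$. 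For (iv), girth $g\ge 6$ forces $G$ triangle-free planar, hence $\chi(G)\le 3$ by Grötzsch's theorem, while the adjacent-vertex-distinguishing results for planar graphs of large girth (and the subcubic case for $\Delta(G)=3$) yield $\chi\,'_{as}(G)\le 2\Delta(G)-1$ whenever $\Delta(G)\ge 3$; combining, $\chi\,''_{2as}(G)\le 2\Delta(G)+2$.

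Parts (iii) and (v) need the decomposition in its full spanning–subgraph form. For (v) I would take $H$ to be the spanning bipartite subgraph of a maximum edge cut of $G$ (the Erdős subgraph): it satisfies $|E(H)|>\frac{1}{2}|E(G)|$, so $G^{*}=G-E(H)$ has $|E(G^{*})|<\frac{1}{2}|E(G)|$, and $\ud_H(u)=\ud_G(u)-\ud_{G^{*}}(u)\le\Delta(G)-\delta(G^{*})$, whence $\Delta(H)\le\Delta(G)-\delta(G^{*})$. Colour $G^{*}$ by a proper total colouring with $\chi\,''(G^{*})$ colours, then colour $E(H)$ by an adjacent-vertex-distinguishing proper edge colouring on a fresh palette; as $H$ is bipartite this costs at most $\Delta(H)+2\le\Delta(G)-\delta(G^{*})+2$ colours, and the distinguishing of the endpoints of each edge follows exactly as in Lemma \ref{thm:c4-vdtc-theorem-4}, giving the stated bound. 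For (iii) I would use that a hamiltonian graph contains a spanning Hamilton path $P_n$ (a spanning tree), combine the spanning–tree inequality $\chi\,''_{2as}(G)\le\chi\,''_{2as}(P_n)+\chi\,'(G-E(P_n))$ (the analogue of (\ref{eqa:new-results-spanningtree-subgraphs})) with $\chi\,'(G-E(P_n))\le\Delta(G)$ from Vizing's theorem (since $\Delta(G-E(P_n))\le\Delta(G)-1$) and the known exact value of $\chi\,''_{2as}(P_n)$, and absorb the vertex colours using $3$-colourability (with Brooks' theorem covering the graphs excluded by $C_{2m+1},K_n$), tuning the numerology so that the total does not exceed $2\Delta(G)+3$.

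The step I expect to be the genuine obstacle is ensuring that the decomposition really delivers (C\textbf{8}) on \emph{every} edge of $G$, not merely on the edges lying in the spanning subgraph that carries the distinguishing colouring: an avdec of a proper spanning subgraph $H$ controls $C(f,x)\ne C(f,y)$ only for $xy\in E(H)$, and for an edge $xy\in E(G)\setminus E(H)$ one must argue separately — using that its endpoints lie on the same side of the bipartition (in (v)) or in the same branch of the spanning path (in (iii)), or by reserving the extra ``$+1$''/``$+2$'' colour. Getting this bookkeeping right, simultaneously with the Brooks-type handling of $C_{2m+1}$ and $K_n$ and with the exact small values $\chi\,'_{as}(C_n)$ and $\chi\,''_{2as}(P_n)$, is where the argument must be written with care; the remainder is assembling the cited upper bounds.
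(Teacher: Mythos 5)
For parts (i), (ii) and (iv) your route is the paper's: the master inequality $\chi\,''_{2as}(G)\leq \chi\,'_{as}(G)+\chi(G)$ from Lemma \ref{thm:N2-vdtc-basic-lemma}(ii), combined with the cited bounds $\chi\,'_{as}\leq 5$ (subcubic), $\chi\,'_{as}\leq \Delta+2$ (bipartite), $\chi\,'_{as}\leq\Delta+2$ (planar, girth $\geq 6$, $\Delta\geq 3$), and Brooks (the paper) or Gr\"{o}tzsch (you) for $\chi(G)$; your separate treatment of $K_4$ in (i) via $\chi\,''_{2as}(K_4)\leq\chi\,''_{2s}(K_4)\leq 2n-1$ is a sensible extra precaution, since Brooks does not cover $K_4$ and the paper's exclusion list omits it. For (v) your construction is literally the paper's: the Erd\H{o}s maximum-cut bipartite spanning subgraph $H$ with $\ud_H(u)\geq\frac{1}{2}\ud_G(u)$, a proper total coloring of $G^*=G-E(H)$ on one palette and an avdec of $H$ on a disjoint palette, with $\Delta(H)\leq\Delta(G)-\delta(G^*)$. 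The worry you raise there --- that the avdec of $H$ only secures (C\textbf{8}) for edges of $H$, while edges of $G^*$ join vertices on the same side of the cut --- is legitimate, but the paper's own proof does not address it either; on (v) you are at parity with the paper, not behind it.

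The genuine gap is part (iii). You abandon the master inequality there and sketch a Hamilton-path decomposition, $\chi\,''_{2as}(G)\leq \chi\,''_{2as}(P_n)+\chi\,'(G-E(P_n))$, with the numerology left to be ``tuned''; but that inequality is nowhere established (the paper's (\ref{eqa:new-results-spanningtree-subgraphs}) is stated only for $\chi\,''_{2s}$ and spanning trees), and it runs head-on into the very obstacle you flag: a chord $xy\in E(G)\setminus E(P_n)$ has endpoints that the path coloring does not distinguish, and the fresh chord colors at $x$ and $y$ can coincide, so (C\textbf{8}) is not secured on chords. The paper closes (iii) without any of this: by the result of Liu and Liu, every connected $3$-colourable Hamiltonian graph satisfies $\chi\,'_{as}(G)\leq\Delta(G)+3$, and the exclusion of $C_{2m+1}$ and $K_n$ is there precisely so that Brooks gives $\chi(G)\leq\Delta(G)$; then Lemma \ref{thm:N2-vdtc-basic-lemma}(ii) yields $\chi\,''_{2as}(G)\leq(\Delta(G)+3)+\Delta(G)=2\Delta(G)+3$ in one line. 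So for (iii) you are missing the key ingredient (the $\chi\,'_{as}\leq\Delta+3$ bound for $3$-colourable Hamiltonian graphs), and the substitute you propose is not a proof as it stands.
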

\begin{proof}  In the article \cite{Balister2}, the authors shown that
$\chi \,'_{as}(G)\leq 5$ if $\Delta(G)\leq 3$, and $\chi
\,'_{as}(G)\leq \Delta(G)+2$ if $G$ is bipartite. By Lemma
\ref{thm:N2-vdtc-basic-lemma} we obtain the assertions $(i)$ and
$(ii)$.

The assertion $(iii)$ follows from Lemma
\ref{thm:N2-vdtc-basic-lemma} and a result of the article
\cite{Bin-Liu-Guizhen-Liu}: every connected $3$-colourable
Hamiltonian graph $G$ holds $\chi\,'_{as}(G)\leq \Delta(G) + 3$.

Wang and Wang \cite{Weifan-Wang-Yiqiao} distribute that a
planar graph $G$ with girth $g\geq 6$ and $\Delta (G) \geq 3$ holds
$\chi\,'_{as}(G)\leq \Delta(G)+2$. This result and Lemma
\ref{thm:N2-vdtc-basic-lemma} induce the assertion $(iv)$.

To show that assertion $(v)$, we apply that a certain bipartite spanning subgraph $H$ of $G$ with
$\ud_H(u)\geq \frac{1}{2}\ud_G(u)$ for every $u\in V(H)$ exists according to
the proof of the assertion $(v)$ of Theorem
\ref{thm:basic-results-mu-colorings}. So, $|E(H)|\geq
\frac{1}{2}|E(G)|$, which means $|E(G^*)|< \frac{1}{2}|E(G)|$, where
$G^*=G-E(H)$. Let $f$ be a proper total coloring of the graph
$G^*=G-E(H)$ with the color set $[1,\chi\,''(G^*)]$, and let $h$ be
a adjacent vertex distinguishing edge coloring of $H$ with the color
set $\{\chi\,''(G^*)+1,\chi\,''(G^*)+2,\dots
,\chi\,''(G^*)+\chi\,'_{as}(H)\}$. Both colorings give
$\chi\,''_{2as}(G)\leq \chi\,''(G^*)+\chi\,'_{as}(H)\leq
\chi\,''(G^*)+\Delta(H)+2$ since $H$ is bipartite and $\chi
\,'_{as}(H)\leq \Delta(H)+2$ (cf. \cite{Balister2}). Notice that
$\Delta(H)=\Delta(G)-\delta(G^*)$. We obtain the assertion $(v)$.

This theorem is covered.
\end{proof}

\subsection{Construction of graphs having total colorings with
additional constrained conditions}

\begin{thm} \label{thm:new-results-mothergraph-subgraphs}
Suppose that a graph $H\in \mathcal {F}_{3s}(n)$ is not a complete graph,
then the graph $G$ obtained by adding an edge of the complement of
$H$ to $H$ holds $\chi\,''_{2s}(G)\leq \chi\,''_{2s}(H)+1$.
\end{thm}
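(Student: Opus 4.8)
The plan is to start from an optimal $\mu(k)$-coloring $f$ of $H$ with $k=\chi\,''_{2s}(H)$ colors, add one new color $k+1$, and then modify $f$ along the new edge $e=uv\in E(\overline{H})$ so that the result is a $\mu(k+1)$-coloring of $G=H+e$. Concretely, I would keep $f$ on every vertex of $G$ and on every edge of $H$, and set $g(e)=k+1$ for the new edge $e=uv$. Since $f$ is a proper total coloring of $H$, the only conflicts created in $g$ can involve the edge $e$ itself or its two endpoints; giving $e$ the brand-new color $k+1$ immediately guarantees $g$ is a proper total coloring of $G$, because $k+1$ appears nowhere else and the endpoint colors $g(u)=f(u)$, $g(v)=f(v)$ are already distinct from all old edge colors at $u,v$ and from each other (as $f$ was proper on $H$ and $u,v$ were nonadjacent, $f(u)\ne f(v)$ is not automatic — I address this below).

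Next I would verify condition (C\textbf{7}): $N_2[g,x]\ne N_2[g,y]$ for all distinct $x,y\in V(G)$. For a vertex $x\notin\{u,v\}$ we have $N_e^G(x)=N_e^H(x)$ and $N^G(x)=N^H(x)$, so $C(g,x)=C(f,x)$ and $C\langle g,x\rangle=C\langle f,x\rangle$, hence $N_2[g,x]=N_2[f,x]$. For $x\in\{u,v\}$, the color $k+1\in C(g,x)\subseteq N_2[g,x]$, while $k+1\notin N_2[g,z]$ for any $z\ne u,v$; thus $N_2[g,u]$ and $N_2[g,v]$ are automatically distinguished from every other vertex set. It remains only to check $N_2[g,u]\ne N_2[g,v]$. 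Here I would use that $N_2[f,u]\ne N_2[f,v]$ already held in $H$ (since $f$ was a $\mu(k)$-coloring of $H$), and argue that adding the common element $k+1$ to both sets — i.e. passing from $N_2[f,u],N_2[f,v]$ to $N_2[g,u]=N_2[f,u]\cup\{k+1\}\cup\{f(v)\}$ and $N_2[g,v]=N_2[f,v]\cup\{k+1\}\cup\{f(u)\}$ — cannot make them coincide unless some coincidence among the old colors $f(u),f(v)$ forces it. If the inequality survives directly, we are done; otherwise one recolors $u$ (using a free color in $[1,k+1]$ avoiding the colors on $N_e^G(u)$, which exists since $\ud_G(u)+1\le k+1$ by Lemma~\ref{thm:observation000}(ii)-type degree bounds) to break the remaining tie, taking care not to disturb the already-verified distinctions.

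The main obstacle I anticipate is exactly this last bookkeeping: after recoloring $u$ to restore $N_2[g,u]\ne N_2[g,v]$, the change of $f(u)$ propagates into $C\langle g,x\rangle$ for every neighbor $x$ of $u$ in $G$, potentially creating new collisions $N_2[g,x]=N_2[g,x']$ that did not exist before. I would handle this by choosing the recoloring of $u$ greedily — forbidding not only the edge-colors at $u$ but also the finitely many ``bad'' colors that would create a collision with some neighbor — and checking that the number of forbidden colors is still at most $k$, so a legal choice in $[1,k+1]$ remains; the slack of one extra color over $\chi\,''_{2s}(H)$ is what makes this count work. A cleaner alternative, which I would present if the case analysis threatens to balloon, is to pick the new color on $e$ and (if needed) the new color on $u$ simultaneously from $\{k+1\}$ and from the palette so that $C(g,u)$ differs from $C(g,v)$ as a set, which already forces $N_2[g,u]\ne N_2[g,v]$ without touching any neighbor's set at all — this localizes all modifications to $e$ alone and sidesteps the propagation issue entirely, at the cost of checking that such a recoloring of the single edge $e$ suffices, which it does because $C(f,u)$ and $C(f,v)$ have size $\ud_H(u)<\ud_G(u)$ and $\ud_H(v)<\ud_G(v)$ respectively, leaving room to separate them with the fresh color.
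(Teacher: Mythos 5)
Your overall strategy (extend an optimal $\mu(k)$-coloring $f$ of $H$ by one fresh color on the new edge $uv$) is the same opening move as the paper's, and it works in the easy case; but your proposal has genuine gaps precisely where the paper has to work. First, the case $f(u)=f(v)$: you flag it (``I address this below'') but never resolve it. If $f(u)=f(v)$, then no coloring that keeps all vertex colors and only colors the new edge can be a proper total coloring of $G$, since $u$ and $v$ are now adjacent; in particular your ``cleaner alternative,'' which explicitly ``localizes all modifications to $e$ alone,'' cannot even produce a proper coloring in this case. Second, that alternative rests on a false implication: $C(g,u)\neq C(g,v)$ does \emph{not} force $N_2[g,u]\neq N_2[g,v]$, because $N_2[g,\cdot]$ is the union of the edge-color set with the closed-neighborhood vertex colors, and two different edge-color sets can have identical unions (e.g.\ $C(g,u)=\{1,2\}$, $C\langle g,u\rangle=\{3,4\}$ versus $C(g,v)=\{1,3\}$, $C\langle g,v\rangle=\{2,4\}$). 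Third, your fallback of greedily recoloring $u$ is not supported by the stated count: recoloring $u$ perturbs $N_2[g,x]$ for \emph{every} $x\in N(u)$, and each such $x$ must stay distinguished from every other vertex of $G$, so the number of ``bad'' colors to forbid is governed by roughly $\ud_G(u)\cdot n$ pairs, not by $\ud_G(u)$; with only $k+1$ colors available there is no reason a legal choice survives, and moreover removing $f(u)$ from a neighbor's set can create a collision that no choice of new color for $u$ repairs.

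For contrast, the paper avoids all of this by never recoloring $u$ and by splitting on $f(u)$ versus $f(v)$. When $f(u)\neq f(v)$ it colors $uv$ with $k+1$ and stops. When $f(u)=f(v)$ it recolors $v$ with the fresh color $k+1$ (which restores propriety on the new edge and marks $u,v$ against all other vertices), colors $uv$ with a color $\alpha\in N_2[f,u]\setminus N_2[f,v]$, and uses a color $\beta\in N_2[f,v]\setminus N_2[f,u]$ to keep $u$ and $v$ separated; if $\alpha$ already occurs on an edge at $u$, it first swaps the whole color class $\alpha\mapsto k+1$ in $H$ (a renaming that preserves the $\mu(k)$ property) so that $\alpha$ becomes usable on $uv$. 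If you want to salvage your draft, you need an analogous mechanism for the $f(u)=f(v)$ case — some recoloring of $v$ (or a global class swap) rather than an unbounded greedy repair at $u$ — and you must replace the $C$-set argument for $N_2[g,u]\neq N_2[g,v]$ by an element-wise one such as the paper's $\alpha$/$\beta$ witnesses.
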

\begin{proof} Let $f$ be a $\mu(k)$-coloring of a non-complete graph $H\in
\mathcal {F}_{3s}(n)$ with $k=\chi\,''_{2s}(H)$. Suppose that $u$ is
not adjacent to $v$ in $H$. We have a graph $G=H+uv$, and define a
coloring $g$ of $G$ in the following cases.

Case 1. $f(u)\neq f(v)$. We set $g(uv)=k+1$, $g(z)=f(z)$ for $z\in
V(G)\cup (E(G)\setminus \{uv\})$. Notice that $N_2[g,u]\setminus
\{k+1\}=N_2[f,u]\neq N_2[f,v]=N_2[g,v]\setminus \{k+1\}$. We can
confirm $N_2[f,x]\neq N_2[f,y]$ for distinct $x,y\in V(G)$. Thereby,
$g$ is a $\mu(k)$-coloring of $G$, and furthermore
$\chi\,''_{2s}(G)\leq \chi\,''_{2s}(H)+1$.

Case 2. $f(u)=f(v)$, $N_2[f,u]\not\subset  N_2[f,v]$ and
$N_2[f,v]\not\subset  N_2[f,u]$. Notice that there is a color
$\alpha\in N_2[f,u]$, but $\alpha\not\in N_2[f,v]$; and there is a
color $\beta\in N_2[f,v]$, but $\beta\not\in N_2[f,u]$.

Case 2.1. $f(uu_i)\neq \alpha$ for $u_i\in N(u)$. We set
$g(uv)=\alpha$, $g(v)=k+1$, and $g(z)=f(z)$ for $z\in
(V(G)\setminus\{v\})\cup (E(G)\setminus \{uv\})$. Notice that
$k+1\in N_2[g,u]$ and $k+1\in N_2[g,v]$. Since $\beta\not\in
N_2[f,u]$, so $\beta\not\in N_2[g,u]$, it follows that $g$ is a
$\mu(k)$-coloring of $G$.

Case 2.2. $f(uu_i)=a$ for some $u_i\in N(u)$. Define $f\,'$ of
$H$ as:  $f\,'(z)=k+1$ if $f(z)=\alpha$ and $f\,'(z)=f(z)$
if$f(z)\neq a$, $z\in V(H)\cup E(H)$.  Notice that $f\,'$ is a
$\mu(k)$-coloring of $H$. Next, we set $g(uv)=a$, $g(v)=k+1$,
and $g(z)=f\,'(z)$ for $z\in (V(G)\setminus\{v\})\cup (E(G)\setminus
\{uv\})$. We can see  $a\not \in N_2[g,z]$ for $z\in
V(G)\setminus \{u,v\}$, and $\beta\not\in N_2[f,u]$. This coloring
$g$ gives $\chi\,''_{2s}(G)\leq \chi\,''_{2s}(H)+1$.

Case 3. $f(u)=f(v)$ and $N_2[f,u]\subset  N_2[f,v]$. Notice that
$N_2[f,v]\setminus N_2[f,u]\neq \emptyset$, so we can obtain the
desired $\mu(k)$-coloring $g$ of $G$ by the methods in Case 2.1 and
Case 2.2.

We complete the proof of the theorem.
\end{proof}

\begin{thm} \label{thm:add-vertex-and-edges}
$(i)$ Adding a leaf to a connected graph $H$ produces a graph $G$
holding $\chi\,''_{2s}(G)\leq \chi\,''_{2s}(H)+1$.

$(ii)$ If a graph $G$ obtained by deleting a vertex of degree $m$
from a connected graph $H$ is connected. Then $\chi\,''_{2s}(G)\leq
\chi\,''_{2s}(H)+m$.

$(iii)$ Adding a leaf $v_i\not\in V(H)$ to a vertex $u_i$ of a
connected graph $H$ for $i\in [1,m]$ produces a connected graph $G$
holding $\chi\,''_{2as}(G)\leq \chi\,''_{2as}(H)+1$.
\end{thm}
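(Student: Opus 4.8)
The plan is to treat all three parts by one template: start from an optimal distinguishing total coloring of the larger graph, keep it essentially unchanged, and repair only the few vertices whose $N_2$-set is disturbed by the local modification, spending exactly the advertised number of extra colors. Throughout I assume $H$ has at least $3$ vertices (so that the relevant chromatic number is defined), whence $k:=\chi\,''_{2s}(H)$, resp.\ $k:=\chi\,''_{2as}(H)$, satisfies $k\ge 3$, and every vertex $p$ of the graphs in play has $|N_2[f,p]|\ge 3$ (since $|C[f,p]|=\ud(p)+1$ and, for a leaf $p$, its neighbor contributes a further color).

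\textbf{Part $(i)$.} Let $v$ be the new leaf attached to $u\in V(H)$ and let $f$ be a $\mu(k)$-coloring of $H$. First I would define $g$ on $G$ by $g=f$ on $V(H)\cup E(H)$, $g(uv)=k+1$, and $g(v)$ any color in $[1,k]\setminus\{f(u)\}$; this is plainly a proper total coloring of $G$. For every $z\in V(H)\setminus\{u\}$ one has $N_2^G[g,z]=N_2^H[f,z]$ (as $z$ is neither incident to $uv$ nor adjacent to $v$), so these vertices stay pairwise distinguished, and since $k+1$ belongs to $N_2^G[g,u]$ and $N_2^G[g,v]$ but to no other $N_2$-set, both $u$ and $v$ are distinguished from every other vertex. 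The single remaining pair is $\{u,v\}$: the set $C[g,u]$ has $\ud_G(u)+1\ge 3$ colors and contains $k+1$; if $\ud_H(u)\ge 2$ this is already $\ge 4$, while if $\ud_H(u)=1$ the color of the unique $H$-neighbor of $u$ lies in $N_2^G[g,u]\setminus C[g,u]$; so in every case $|N_2^G[g,u]|\ge 4>3\ge|N_2^G[g,v]|$. Thus $g$ is a $\mu(k+1)$-coloring and $\chi\,''_{2s}(G)\le\chi\,''_{2s}(H)+1$.

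\textbf{Part $(iii)$.} Here $u_1,\dots,u_m$ are the (distinct) support vertices and $f$ is a $\mu_e(k)$-coloring of $H$. Since the new edges $u_iv_i$ are pairwise non-adjacent, I would set $g(u_iv_i)=k+1$ for every $i$, keep $g=f$ on $V(H)\cup E(H)$, and choose $g(v_i)$ inside $N_2[f,u_i]\setminus\{f(u_i)\}$ (nonempty); then $N_2^G[g,u_i]=N_2^H[f,u_i]\cup\{k+1\}$, while $N_2^G[g,z]=N_2^H[f,z]$ for every non-support vertex $z$. It then remains to inspect the edges of $G$: an old edge between two non-support vertices is handled by $f$; an edge $u_iz$ with $z$ non-support is separated because only $N_2^G[g,u_i]$ contains $k+1$; an edge $u_iu_j$ is separated because $N_2^G[g,u_i]\setminus\{k+1\}=N_2^H[f,u_i]\ne N_2^H[f,u_j]=N_2^G[g,u_j]\setminus\{k+1\}$; and the new edge $u_iv_i$ is separated by the cardinality bound $|N_2^G[g,u_i]|\ge 4>3\ge|N_2^G[g,v_i]|$. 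Hence $g$ is a $\mu_e(k+1)$-coloring, as wanted.

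\textbf{Part $(ii)$, and the main obstacle.} Write $w$ for the deleted vertex and $N_H(w)=\{x_1,\dots,x_m\}$, let $f$ be a $\mu(k)$-coloring of $H$, and restrict $f$ to $G=H-w$. The deletion of $w$ alters the neighborhood, hence the $N_2$-set, of no vertex outside $N_H(w)$, so $N_2^G[f,z]=N_2^H[f,z]$ for such $z$ and every conflict of the restricted coloring involves some $x_i$. The idea is to spend the $m$ fresh colors $k+1,\dots,k+m$ to ``tag'' the $x_i$'s, recoloring $x_i$ itself with $k+i$ or, where this would fail, recoloring a suitably chosen edge incident to $x_i$ (the edges selected by a system-of-distinct-representatives argument when $N_H(w)\ne V(G)$, with the degenerate case $N_H(w)=V(G)$, i.e.\ $G$ a tree spanned by $N_H(w)$, treated directly). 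Properness is immediate; the real work — and the step I expect to be the main obstacle — is the distinguishing verification, whose genuinely delicate case is a pair $x_i,x_j$ that are \emph{adjacent} in $G$ and satisfy $N_G[x_i]\cap N_H(w)=N_G[x_j]\cap N_H(w)$, since then the same set of new colors enters both $N_2$-sets. I would dispatch this by returning to $H$: there $N_2^H[f,x_i]\ne N_2^H[f,x_j]$, the edges at $w$ carry pairwise distinct colors so $f(x_iw)\ne f(x_jw)$, and $f$ is proper around any common neighbor of $x_i,x_j$; these facts should pin down a color separating $x_i$ from $x_j$ inside $G$, if necessary after moving one of the two reserved colors onto the edge $x_ix_j$. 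Once this case analysis is complete, the coloring obtained is a $\mu(k+m)$-coloring of $G$, yielding $\chi\,''_{2s}(G)\le\chi\,''_{2s}(H)+m$. (One could alternatively try to peel off the $m$ edges at $w$ one at a time in the spirit of Theorem~\ref{thm:new-results-mothergraph-subgraphs}, but that would require an edge-deletion bound $\chi\,''_{2s}(G-e)\le\chi\,''_{2s}(G)+1$, which is not available here and appears to demand its own case analysis.)
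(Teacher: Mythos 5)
Your parts $(i)$ and $(iii)$ are correct and are essentially the paper's own proof: the paper likewise keeps the optimal coloring of $H$ untouched, puts the single fresh color $k+1$ on the new pendant edge(s), and colors each leaf with a color already lying in $N_2[f,u]$ (it takes $f(u\,')$, resp. $h(u_iu\,'_i)$, for a neighbor $u\,'$ of the support vertex); your cardinality comparison $|N_2[g,u]|\geq 4>3\geq |N_2[g,v]|$ just replaces the paper's ``proper subset'' observation, and your slightly freer choice of the leaf colors changes nothing essential. Your verification of the remaining pairs is in fact more complete than what the paper writes out.

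Part $(ii)$ is where there is a genuine gap, and also where your route departs from the paper's. You read $(ii)$ literally as a vertex-deletion bound and try to repair the restriction of a $\mu(k)$-coloring of $H$ to $G=H-w$ with $m$ fresh colors; but the decisive step is left as an intention (``should pin down a color\dots'', ``once this case analysis is complete''), and that is precisely where the naive tagging can fail: if $x_ix_j\in E(G)$, $N_G[x_i]\cap N_H(w)=N_G[x_j]\cap N_H(w)$, and $N_2^H[f,x_i]$, $N_2^H[f,x_j]$ differ only in the colors $f(x_iw)\neq f(x_jw)$ whose sole source is the deleted edges, then after deletion and uniform tagging the two sets coincide; so a concrete recoloring rule together with a full verification is indispensable and is not supplied. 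The paper argues quite differently: it rebuilds $H$ from $G$ by one application of part $(i)$ (re-attaching $w$ as a leaf at $w_1$) followed by $m-1$ applications of Theorem~\ref{thm:new-results-mothergraph-subgraphs} (adding the edges $ww_i$). Note, however, that this chain yields $\chi\,''_{2s}(H)\leq \chi\,''_{2s}(G)+m$, i.e.\ the inequality with the roles of $G$ and $H$ exchanged relative to the literal wording of $(ii)$; the paper's statement and proof of $(ii)$ are internally inconsistent (apparently a swap of $G$ and $H$), and under the swapped reading no edge-deletion lemma is needed, only the edge-addition theorem you set aside. Under the literal reading you chose --- a true subgraph/deletion bound, which is delicate exactly because (as the paper itself remarks) distinguishing chromatic numbers can increase on subgraphs --- neither your sketch nor the paper's argument proves the claim, so as it stands your part $(ii)$ is not a proof.
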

\begin{proof} $(1)$ Let $f$ be a $\mu(k)$-coloring of a connected graph
$H$ with $k=\chi\,''_{2s}(H)$. We add a leaf $v$ to $H$ by joining
$v$ and a vertex $u\in V(G)$, and the resulting graph is denoted as
$G$. We define a coloring $g$ of $G$ as: $g(w)=f(w)$ for $w\in
(V(G)\cup E(G))\setminus\{v,uv\}$; $g(uv)=k+1$, and $g(v)=f(u\,')$,
where $u\,'\in N(u)$. Notice that $N_2[g,v]=\{f(u\,'),f(u),k+1\}$ is
a proper subset of $N_2[g,u]$ since $f(uu\,')\not\in N_2[g,v]$. We
see that $g$ is a $\mu(k+1)$-coloring of $G$, which induces the
assertion $(i)$.

$(2)$ Let $H$ be a connected graph, and let $G=H-w$ be connected
with $\ud_H(w)=m$ and $N(w)=\{w_1,w_2,\dots,w_m\}$. By the assertion
$(i)$ a connected graph $G_1$ by adding a leaf $w$ to $G$ through
joining $w$ and a vertex $w_1\in V(G)$ holds $\chi\,''_{2s}(G_1)\leq
\chi\,''_{2s}(H)+1$. Applying Theorem
\ref{thm:new-results-mothergraph-subgraphs} repeatedly $(m-1)$ times
by joining $w$ and each $w_i\in N(w)\setminus \{w_1\}$, we get
$\chi\,''_{2s}(G)\leq \chi\,''_{2s}(H)+m$, as desired.

$(3)$ Let $h$ be a $\mu_e(k)$-coloring of a connected graph
$H$ with $k=\chi\,''_{2as}(H)$. Take distinct vertices $v_i\not\in
V(H)$, and select arbitrarily distinct vertices $u_i\in V(H)$, $i\in
[1,m]$. We have a graph $G$ obtained by joining $v_i$ and $u_i$ by
an edge for $i\in [1,m]$, and define a coloring $\beta$ of $G$ in
the way that $\beta(z)=h(z)$ for $z\in (V(G)\cup
E(G))\setminus\{v_i,u_iv_i:i\in [1,m]\}$; $\beta(u_iv_i)=k+1$ and
$\beta(v_i)=h(u_iu\,'_i)$ for $i\in [1,m]$, where $u\,'_i\in
N(u_i)$. It follows that the coloring $\beta$ is a
$\mu_e(k)$-coloring of $G$, since $N_2[\beta,x]\neq N_2[\beta,y]$ for every
edge $xy\in E(G)$.
\end{proof}

\section{Problems for further works}

As further works, we propose the following problems:

\begin{prob} \label{prob:conjecture}
(1) If $G_1\subseteq H\subseteq G_2$ and
$\chi\,''_{\lambda}(G_1)=\chi\,''_{\lambda}(G_2)=k$, do we have
$\chi\,''_{\lambda}(H)=k$ for $\lambda=2s,2as$?

(2) Let $D(G)=2$. If the smallest number $k$ satisfies that
${{k}\choose \Delta+1}\geq n$, then $|\chi\,''_{2s}(G)-k|\leq 1$?

(3) Suppose that $D(G)\geq 3$ and $\delta(G)\neq \Delta(G)$. If
${{k}\choose \Delta+1}\geq n$ and ${{m}\choose \delta+1}\geq n$ and
$m\leq k $, then $m\leq \chi\,''_{2s}(G)\leq k$?

(4) Characterize simple graphs $G$ such that $N(u)\cup \{u\}\neq
N(v)\cup \{v\}$ for distinct $u,v\in V(G)$ (resp. for every edge
$uv\in E(G)$).
\end{prob}

\begin{conj} \label{prob:conjecture}
Every connected, simple graph $G$ holds $\chi\,'_{s}(G)\leq
\chi\,''_{(s)}(G)\leq \chi\,'_{s}(G)+1$ and $\chi\,'_{as}(G)\leq
\chi\,''_{(as)}(G)\leq \chi\,'_{as}(G)+1$.
\end{conj}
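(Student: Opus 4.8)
We sketch a line of attack toward the two pairs of inequalities; throughout, $c$ will denote a proper edge colouring and $f$ a proper total colouring.

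The two lower bounds require nothing new. Restricted to $E(G)$, any e-partially $k$-vdtc (resp. e-partially $k$-avdtc) of $G$ is a proper edge colouring, and since (C\textbf{1}) (resp. (C\textbf{2})) speaks only about the sets $C(f,\cdot)$ of colours on incident edges, that restriction is a vdec (resp. avdec); hence $\chi\,'_{s}(G)\le\chi\,''_{(s)}(G)$ and $\chi\,'_{as}(G)\le\chi\,''_{(as)}(G)$, exactly as recorded in Lemma~\ref{thm:observation000}$(i)$. So the whole content is the two upper bounds, which would sharpen $\chi\,''_{(s)}(G)\le\chi\,'_{s}(G)+\chi(G)$ and $\chi\,''_{(as)}(G)\le\chi\,'_{as}(G)+\chi(G)$ from Lemma~\ref{thm:N2-vdtc-basic-lemma}$(ii)$ by cutting the additive $\chi(G)$ down to $1$. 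I describe the plan for $\chi\,''_{(s)}(G)\le\chi\,'_{s}(G)+1$; the adjacent version is the same after replacing ``distinct $u,v\in V(G)$'' by ``every edge $uv\in E(G)$'' throughout.

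The plan is a reduction to list vertex-colouring. Fix an optimal vdec $c\colon E(G)\to[1,k]$ with $k=\chi\,'_{s}(G)$ and keep it on the edges. Because (C\textbf{1}) involves only edge colours, \emph{any} proper total colouring $f$ with $f|_{E(G)}=c$ automatically has $C(f,u)=C(c,u)\ne C(c,v)=C(f,v)$ for distinct $u,v$, hence is an e-partially vdtc; so it suffices to colour the vertices from $[1,k+1]$ so that the total colouring stays proper. Equivalently, one must properly colour $G$ by choosing for each $v$ a colour from the list $L(v)=[1,k+1]\setminus\{c(vx):x\in N(v)\}$, of size $|L(v)|=k+1-\ud_G(v)\ge k+1-\Delta(G)$. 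Two features of a vdec should drive the argument: (a) at most one vertex $v_0$ has $\ud_G(v_0)=k$ (then all $k$ colours appear at $v_0$, and two such vertices would share the set $C(c,\cdot)$), and that $v_0$, if present, is forced onto the reserve colour $k+1$; (b) vertices of equal degree $d<k$ have pairwise distinct ``missing'' sets $[1,k]\setminus C(c,v)$, so the short lists are well separated, and in particular the at most $k$ vertices of degree $k-1$ carry pairwise distinct forced colours $m(v)$, so setting $f(v)=m(v)$ on all of them is already a proper partial colouring. The procedure is then: colour $v_0$ with $k+1$; colour the degree-$(k-1)$ vertices by their missing colours; then complete on the rest, of degree $\le k-2$, from their lists $L(\cdot)$ of size $\ge 3$. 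When $k\ge 2\Delta(G)$ every list satisfies $|L(v)|\ge\ud_G(v)+1$ and a plain greedy order finishes; when $G$ is bipartite with $k>\Delta(G)$ the reduction is precisely the ``one class on colour $k+1$'' device of Theorem~\ref{thm:xxxxxxxxxx}, so that case is done.

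The hard range is $\Delta(G)\le\chi\,'_{s}(G)<2\Delta(G)$: a vertex of degree between roughly $k/2$ and $k-2$ then has a short list (as small as $3$, or $2$ in the avdec variant after the forced choices) while possibly having almost all of its neighbours pre-coloured, so a naive greedy completion can stall, and short-list colouring is genuinely obstructed (small even cycles already fail to be $2$-choosable). Beating this seems to need two ingredients: not extending an \emph{arbitrary} optimal vdec but \emph{choosing} $c$ so the lists $L(\cdot)$ avoid bad configurations, and a Hall-type / distinct-representatives argument on the missing sets of the high-degree vertices combined with a bounded local recolouring that spends only the one reserve colour. Making this uniform is exactly what is open, which is why the statement is conjectural. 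A sensible first step is to establish the bound for graphs of bounded maximum degree (where the residual near-regular range is amenable to explicit case analysis, in the style of the $\Delta\le 3$ arguments used elsewhere in the paper), and then to bootstrap toward the general case by leaf- and vertex-addition arguments in the spirit of Theorem~\ref{thm:add-vertex-and-edges}.
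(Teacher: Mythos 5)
The statement you are trying to prove is stated in the paper as a \emph{conjecture}: the paper itself offers no proof of the two upper bounds, so there is no argument of the authors to compare yours against. Within that frame, your treatment of the lower bounds is correct and is exactly what the paper records in Lemma~\ref{thm:observation000}$(i)$: restricting an e-partially $k$-vdtc (resp.\ $k$-avdtc) to $E(G)$ gives a vdec (resp.\ avdec) because (C\textbf{1}) and (C\textbf{2}) only constrain the edge-colour sets $C(f,\cdot)$. Your reduction of the upper bounds to a list vertex-colouring problem on the lists $L(v)=[1,k+1]\setminus\{c(vx):x\in N(v)\}$ is also sound as far as it goes, and the two structural observations (uniqueness of a degree-$k$ vertex under a vdec, distinct missing colours among degree-$(k-1)$ vertices) are correct; the bipartite case with $\chi\,'_s(G)>\Delta(G)$ indeed reduces to the device of Theorem~\ref{thm:xxxxxxxxxx}, and $k\ge 2\Delta(G)$ is handled greedily.

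However, this is not a proof, and you say so yourself: the entire difficulty of the conjecture lives in the range $\Delta(G)\le \chi\,'_s(G)<2\Delta(G)$, where the lists $L(v)$ can have size as small as $2$ or $3$ while almost all neighbours of $v$ are already pre-coloured, and list-colouring with such short lists is genuinely obstructed (even cycles are not $2$-choosable). The two ingredients you propose to overcome this --- selecting the optimal vdec $c$ so that the induced lists avoid bad configurations, and a Hall-type system-of-distinct-representatives argument on the missing sets combined with local recolouring using only the reserve colour $k+1$ --- are named but not carried out, and there is no argument that a suitable $c$ exists, nor that a stalled greedy completion can always be repaired without introducing a second extra colour. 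In addition, your remark that ``the adjacent version is the same'' needs care: under an avdec several vertices of degree $k$ may coexist (pairwise non-adjacent), and equal-degree vertices are only separated when adjacent, so the ``well-separated short lists'' phenomenon you rely on in (b) is weaker there. So what you have is a correct proof of the trivial half of the conjecture plus a plausible programme for special cases (bipartite, $k\ge 2\Delta$, bounded $\Delta$); the conjectured bounds $\chi\,''_{(s)}(G)\le\chi\,'_{s}(G)+1$ and $\chi\,''_{(as)}(G)\le\chi\,'_{as}(G)+1$ remain unproved, exactly as the paper leaves them.
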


It has been discovered that there are infinite simple graphs $G$ and
some their proper subgraphs $H$ such that $\chi\,'_{\lambda}(H)>\chi\,'_{\lambda}(G)$ and
$\chi\,''_{\lambda}(H)>\chi\,''_{\lambda}(G)$ for $\lambda=s,as$ (cf. \cite{Yao-Chen-Yao-Wang2012}). But, we have
\begin{conj} \label{prob:conjecture}
No simple graph $G$ and its proper subgraphs $H$ hold
$\chi\,''_{(8)}(G)<\chi\,''_{(8)}(H)$ true.
\end{conj}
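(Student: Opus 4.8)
The plan is to recast the statement as the monotonicity assertion: whenever $G\in\mathcal{F}_{3s}(n)$ and its proper subgraph $H$ both admit $(8)$-distinguishing total colorings, one has $\chi\,''_{(8)}(H)\le\chi\,''_{(8)}(G)$ (when $H$ admits no such coloring there is nothing to prove). First I would reduce to the case where $H$ is obtained from $G$ by deleting a single edge. This uses that removing an isolated vertex leaves $\chi\,''_{(8)}$ unchanged: for an isolated vertex $w$ the sets $C(f,w)$, $C\langle f,w\rangle$, $C[f,w]$, $N_2[f,w]$ have cardinalities $0,1,1,1$ and so can never equal the corresponding set of a vertex of positive degree, while a graph of $\mathcal{F}_{3s}(n)$ carries at most one isolated vertex, so no two such vertices can clash; hence deleting a vertex amounts (up to routine bookkeeping for isolated vertices) to deleting its incident edges and discarding an isolated vertex. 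So it suffices to prove: if $G=H+xy$ with $xy$ an edge of the complement of $H$, and both graphs admit $(8)$-distinguishing total colorings, then every such $k$-coloring of $G$ can be turned into one of $H$ using at most $k$ colors. (One technical point already surfaces: an intermediate graph $G-e$ in the induction may fail to be $(8)$-colourable although the eventual $H\subseteq G-e$ is; this must be handled by a careful choice of the order of edge removals, or by carrying the whole chain of $(8)$-colourable graphs $H=G_0\subsetneq\cdots\subsetneq G_t=G$ in the hypothesis.)

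For the single-edge step, take an $(8)$-distinguishing total coloring $f$ of $G$ with $k=\chi\,''_{(8)}(G)$ colors, put $c=f(xy)$, and let $g$ be the restriction of $f$ to $V(H)\cup E(H)$. Then $g$ is a proper total coloring of $H$ agreeing with $f$ off $\{x,y\}$, and the only color sets that move are, at $x$, $C(g,x)=C(f,x)\setminus\{c\}$, $C[g,x]=C[f,x]\setminus\{c\}$, $C\langle g,x\rangle\in\{\,C\langle f,x\rangle,\ C\langle f,x\rangle\setminus\{f(y)\}\,\}$, $N_2[g,x]\subseteq N_2[f,x]$, and symmetrically at $y$. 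Consequently $g$ still satisfies all of (C\textbf{1})--(C\textbf{8}) \emph{except} possibly where a set of $x$ or of $y$ now coincides with the set of some other vertex, and only finitely many pairs are endangered. To repair, one exploits that in $H$ the color $c$ occurs on no edge at $x$ and on no edge at $y$ and differs from both $f(x)$ and $f(y)$: it may be recycled onto a well-chosen edge incident to $x$ (or $y$), or onto the vertex $x$ (or $y$) itself, after which the few newly affected pairs are re-examined and, if necessary, chased with further local recolorings, in the style of the case analysis in the proof of Theorem \ref{thm:new-results-mothergraph-subgraphs}. Since every move stays within $[1,k]$, a completed repair uses at most $k$ colors.

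The crux, and the reason this remains a conjecture, is to carry the repair through \emph{simultaneously for the four families (C\textbf{1}), (C\textbf{3}), (C\textbf{5}), (C\textbf{7})} of conditions. The conjunction is essential: by \cite{Yao-Chen-Yao-Wang2012} there are proper subgraphs on which each of $\chi\,''_{s}$ and $\chi\,''_{as}$ strictly increases, so any proof must trade on the fact that in an $(8)$-distinguishing coloring every pair of vertices is separated in four ways at once. The dangerous configuration is, say, $C(g,x)=C(g,z)$ for some $z$ with $\ud_H(z)=\ud_H(x)$: putting color $c$ on an edge $xz_0$ restores the edge color set of $x$ but perturbs $C(g,z_0)$ and may threaten properness as well as (C\textbf{1}), (C\textbf{3}), (C\textbf{5}), (C\textbf{7}) at $x$, at $z_0$, and at their neighbors. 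The cleanest finish would presumably come from first establishing a lower bound of the form $\chi\,''_{(8)}(G)\ge 2\Delta(G)$ (or any bound guaranteeing a spare color at each vertex), which would make the recoloring step routine; lacking that, one is left with a finite but intricate case analysis on how many of the four separations break and on the degrees of the colliding vertices, and proving that no such configuration is irreparable is where I expect the real obstacle to lie. A more global route --- sandwiching $\chi\,''_{(8)}(H)$ between an upper bound from Lemma \ref{thm:observation111}$(ii)$ and a counting lower bound for $\chi\,''_{(8)}(G)$ --- looks less promising, because the quantity $\chi\,'_{s}$ occurring in those bounds is itself not monotone under subgraphs.
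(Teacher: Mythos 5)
This statement is not a theorem of the paper at all: it is posed there as an open conjecture, with no proof given, so there is nothing in the paper to compare your argument against. Your proposal, read on its own terms, is also not a proof, and you say so yourself: the entire content of the claim is concentrated in the ``repair'' step after deleting a single edge $xy$, namely showing that the restricted coloring $g$ can always be locally modified within the same $k$ colors so that \emph{all} of (C\textbf{1})--(C\textbf{8}) are restored simultaneously, and that step is left as an acknowledged gap. Everything before it (the reduction to single-edge deletion, the bookkeeping of which color sets change at $x$ and $y$) is routine and correct, but it does not touch the difficulty; indeed your own reduction has a second unresolved hole you flag in passing, since an intermediate graph $G-e$ may fail to admit any $(8)$-distinguishing total coloring or may leave the class $\mathcal{F}_{3s}(n)$ (isolated edges, two isolated vertices), so even the induction framework is not yet in place.

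The one concrete new idea you offer as a possible way to make the repair routine --- first proving a lower bound of the form $\chi\,''_{(8)}(G)\geq 2\Delta(G)$ --- cannot work as stated: for a star $K_{1,m}$ the leaves are separated by $m$ distinct edge colors plus a bounded amount more, so $\chi\,''_{(8)}(K_{1,m})$ grows like $m+O(1)$, which is far below $2\Delta(G)=2m$ (compare the value $\chi\,''_{(8)}(S)=7$ reported in the paper's Figure~2(a)); so no spare-color guarantee of that strength is available, and the case analysis you defer is exactly where a genuine proof would have to live. In short: you have correctly identified why the statement is plausible (an $(8)$-distinguishing coloring separates every pair in four ways at once, unlike the single conditions for which monotonicity fails), but neither you nor the paper proves it, and your proposal should be regarded as a plan with its central step missing rather than as a proof.
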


\vskip 0.6cm

\noindent \textbf{Acknowledgment.} The author, \emph{Bing Yao}, thanks the National Natural Science
Foundation of China under grants No. 61163054 and No. 61163037. The second author, \emph{Ming Yao}, thanks The Special Funds of Finance Department of Gansu Province of China under grant No. 2014-63. The third author, \emph{Xiang-en Chen}, thanks the National Natural Science Foundation of China under grant No. 61363060.

{\footnotesize

}

\end{document}